\newtheorem{theorem}{Theorem}[section]
\newtheorem{lemma}[theorem]{Lemma}
\newcommand{\mychi}{\raisebox{0pt}[1ex][1ex]{$\chi$}}
\theoremstyle{definition}
\newtheorem{definition}[theorem]{Definition}
\newtheorem{example}[theorem]{Example}
\newtheorem{corollary}{Corollary}
\theoremstyle{proposition}
\newtheorem{proposition}{Proposition}
\theoremstyle{remark}
\newtheorem{remark}[theorem]{Remark}
\numberwithin{equation}{section}
\begin{document}

\title{Zero divisors and topological divisors of zero in certain Banach algebras}


\author{Anurag Kumar Patel}
\address{Department of Mathematics, Banaras Hindu University, Varanasi-221005, India}
\curraddr{}
\email{anuragrajme@gmail.com}

\author{Harish Chandra}
\address{Department of Mathematics, Banaras Hindu University, Varanasi-221005, India}
\curraddr{}
\email{harishc@bhu.ac.in}
\thanks{}

\subjclass[2010]{43A99, 47B33, 30H10, 28A20 }

\keywords{Banach algebra, Composition operators, Hardy spaces, Measurable functions}

\date{}

\dedicatory{}

%
\thanks{The first author is supported by the Council of Scientific and Industrial Research (CSIR) NET-JRF, New Delhi, India, through grant $09/013(0891)/2019-EMR-I$}

\begin{abstract}
		In this paper we prove that an element $f\in \mathcal{A}(\mathbb{D})$ is a topological divisor of zero(TDZ) if and only if there exists $z_0 \in \mathbb{T}$ such that $f(z_0)=0.$ We also give a characterization of TDZ in the Banach algebra $L^\infty(\mu).$  Further, we prove that the multiplication operator $M_h$  is a TDZ in $\mathcal{B}(L^p(\mu))~(1\leq p\leq\infty)$ if and only if $h$ is a TDZ in $L^\infty(\mu).$ Subsequently, we show that a composition operator $C_{\phi}$ is a TDZ in $\mathcal{B}(L^2(\mu))$ if and only if $\frac{d\mu \phi^{-1}}{d\mu}$ is a TDZ in $L^{\infty}(\mu).$ Lastly, we determine composition operators on the Hardy spaces $\mathbb{H}^p(\mathbb{D})$ and $\ell^p$ spaces which are zero-divisors.
\end{abstract}

\maketitle
\section{Introduction}
The notion of zero-divisor{\cite{GelfSilov, Simmons}} is well known from ring theory. In a ring $\mathfrak{R},$ an element $z\in \mathfrak{R}$ is called a left~$(right)$ zero-divisor in $\mathfrak{R}$ if $zx=0~(xz=0)$ for some $0\neq x\in \mathfrak{R}.$ An element $z\in \mathfrak{R}$ is called zero-divisor if it is either a left (or right) zero-divisor.
The concept of topological divisor of zero(\boldmath{TDZ}){\cite{Conway, Simmons}} is an important generalization in a Banach algebra, introduced by Shilov in {\cite{Shilow}},  with significant later contributions by Zelazko, Kaplansky et al. Let $\mathfrak{B}$ be a Banach algebra. An element $x\in \mathfrak{B}$ is a TDZ in $\mathfrak{B}$ if $$xx_n \to 0 \text{~or~} x_nx\to 0$$ for some sequence $\{x_n\}_{n=1}^\infty$ of elements of $\mathfrak{B}$ satisfying $$\|x_n\|=1,\quad n=1,2,...$$ 
\\
Bhatt and Dedania in {\cite{Bhatt}} gave a classification of Banach algebras in which every element is a TDZ.
The purpose of the present paper is to determine TDZ in some Banach algebras which do not fall in the preceding category.
\\ 	
In this paper, $\mathbb{N}$ denotes the set of all positive integers, $\mathbb{C}$ the set of complex numbers, $\vert A\vert$ the cardinality of a set $A $ and $\mathcal{R}(f)$ denotes the range of a function $f$. Let $\mathbb{D}=\{z\in \mathbb{C}:\vert z\vert<1\},$ $\mathbb{\overline{D}}= \{z\in \mathbb{C}:\vert z\vert\leq 1\}$ and $\mathbb{T}=\{z\in \mathbb{C}:\vert z\vert=1\}$ be the unit circle. Let $\mathcal{A}(\mathbb{D})$ denote the Banach algebra of all continuous functions on $\mathbb{\overline{D}}$ which are analytic in $\mathbb{D}$ under the sup-norm and $L^\infty(\mu)$ be the Banach algebra of all essentially bounded complex-valued measurable functions on $X$ under the essential supremum norm. Further, $\sigma_{p} (M_h),~\sigma_{c} (M_h)$ and $\sigma (M_h)$ denotes the point spectrum, the continuous spectrum and the spectrum of $M_h$ respectively. Further, $\mathcal{B}(L^p(\mu))~(1\leq p\leq\infty)$ denotes the Banach algebra of all bounded linear operators on $L^p(\mu).$ \\ 
The current article is aimed at finding zero-divisors and TDZ in $\mathcal{A}(\mathbb{D})$ and $L^\infty(\mu)$ in its full generality. It also determines the conditions under which the multiplication operator on $L^p(\mu)~(1\leq p\leq\infty)$ and the composition operator on $L^2(\mu)$ are TDZ. The paper is structured as follows.\\
In Section \ref{sec2}, we cite certain definitions and results which will be used in this paper. Section \ref{sec3.1} is devoted for the characterization of TDZ in $\mathcal{A}(\mathbb{D}).$ In section \ref{sec3.2}, we characterize the zero-divisors and TDZ in $L^\infty(\mu).$ Section \ref{sec3.3} is devoted to find a necessary and sufficient condition for a multiplication operator $M_h$ to be TDZ in $\mathcal{B}(L^p(\mu)).$ In Section \ref{sec3.4}, we find a necessary and sufficient condition for the composition operator $C_\phi$ on $L^2(\mu)$ to be a TDZ in $\mathcal{B}(L^2(\mu)).$ In Section \ref{sec3.5} we classify the composition operators on $\mathbb{H}^p(\mathbb{D})$ which are zero-divisors in $\mathcal{B}(\mathbb{H}^p(\mathbb{D})).$ Finally, in Section \ref{sec3.6}, we determine the composition operators on $\ell^p$ which are zero-divisors and TDZ in $\mathcal{B}(\ell^p).$

\section{Preliminaries} \label{sec2}
In this section, we state few definitions and some known results which will be used later.		
\begin{definition}\rm{\cite{RGDoug,Simmons}}\label{anurag1}
	Let $\mathfrak{B}$ denote a Banach algebra with identity $1$. An element $x\in \mathfrak{B}$ is called regular if there exists $y\in \mathfrak{B}$ such that $ xy=yx=1.$ Otherwise, $x\in \mathfrak{B}$ is  called a singular element. 
\end{definition}

\begin{theorem}\rm {\cite{Angus}}\label{anurag4}
	In a Banach algebra, the set of all TDZ is a closed set.
\end{theorem}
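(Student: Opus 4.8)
The plan is to prove sequential closedness directly: given a convergent sequence of topological divisors of zero, I will produce for its limit a unit‑norm sequence witnessing that the limit is again a TDZ. Let $\mathfrak{B}$ be a Banach algebra, let $Z\subseteq\mathfrak{B}$ denote the set of all TDZ, and let $\{y_k\}$ be a sequence in $Z$ with $y_k\to y$ in $\mathfrak{B}$; the goal is to show $y\in Z$. By definition, for each $k$ there is a sequence $\{w_{k,n}\}_{n}$ with $\|w_{k,n}\|=1$ for all $n$ and with either $y_k w_{k,n}\to 0$ or $w_{k,n} y_k\to 0$ as $n\to\infty$. By the pigeonhole principle, one of these two alternatives holds for infinitely many $k$; since every subsequence of $\{y_k\}$ still converges to $y$, I may pass to that subsequence and assume, say, that $y_k w_{k,n}\to 0$ as $n\to\infty$ for every $k$ (the remaining case is entirely symmetric, with right multiplication replacing left multiplication throughout).

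The second step is a diagonal selection. For each $k$, since $\|y_k w_{k,n}\|\to 0$ as $n\to\infty$, I fix an index $n_k$ with $\|y_k w_{k,n_k}\|<\tfrac1k$ and set $u_k:=w_{k,n_k}$, so that $\|u_k\|=1$ and $\|y_k u_k\|<\tfrac1k$. Then, using submultiplicativity of the norm,
\[
\|y u_k\|\le \|(y-y_k)u_k\|+\|y_k u_k\|\le \|y-y_k\|\,\|u_k\|+\|y_k u_k\|<\|y-y_k\|+\tfrac1k .
\]
The right‑hand side tends to $0$ as $k\to\infty$, so $y u_k\to 0$ with $\|u_k\|=1$ for all $k$. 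Hence $y$ is a TDZ, i.e. $y\in Z$, and $Z$ is closed. (Equivalently, one could phrase this as: the set of left TDZ and the set of right TDZ are each closed by the argument above without the pigeonhole step, and $Z$ is their union.)

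I do not anticipate a genuine obstacle here. The only points that require a little care are the left/right dichotomy built into the definition of a TDZ — handled by passing to a subsequence — and the legitimacy of the diagonal choice of $u_k$, which is immediate from the definition of a limit. Everything else is the triangle inequality together with $\|ab\|\le\|a\|\,\|b\|$.
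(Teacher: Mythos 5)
Your argument is correct. The paper itself offers no proof of this statement --- it is quoted from Taylor--Lay --- so there is nothing to diverge from; your sequential-closedness argument (pigeonhole on the left/right alternative, then a diagonal choice of unit-norm witnesses $u_k$ with $\|y_ku_k\|<\tfrac1k$, followed by the triangle inequality and submultiplicativity) is the standard proof and is complete. The parenthetical reformulation, that the sets of left and right TDZ are each closed and their union is the set of all TDZ, is also valid and slightly cleaner since it removes the need for the subsequence extraction.
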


\begin{definition}\rm {\cite{Daniel Alpay,Peter Duren,Hoffman}} \label{Hardy space} For $0<p<\infty$ the Hardy space $\mathbb{H}^p(\mathbb{D})$ is the set of functions analytic on the unit disk for which
	\begin{equation*}
		\sup_{r\in(0,1)}\frac{1}{2\pi}\int_{0}^{2\pi}|f(re^{i\theta})|^pd\theta<\infty.
	\end{equation*}
	Let $\mathbb{H}^\infty(\mathbb{D})$ be the Banach algebra of all bounded analytic functions in $\mathbb{D}$ under the sup-norm. 
\end{definition}


	%

\begin{definition}\rm{\cite{Carl C Cowen,John B Garnett,HJ Schwartz}}
	Let $\phi$ be a holomorphic function on $\mathbb{D}$ with $\phi(\mathbb{D})\subset \mathbb{D}.$ Then the equation $C_{\phi}f=f \circ \phi$ defines a composition operator $C_{\phi}$ on the space of holomorphic functions in $\mathbb{D}.$ The Littlewood's subordination principle{\cite{Peter Duren,JELittlewood}} says that $C_{\phi}$ is bounded linear operator on $\mathbb{H}^p(\mathbb{D})~(1\leq p\leq\infty).$
\end{definition}

\begin{theorem} \rm{\cite{Caughran}} \label{Caratheodory}
	Let $\phi:\mathbb{D} \to\mathbb{D}$ be an analytic map which maps $\mathbb{D}$ conformally on to a Carath$\acute{e}$odory domain, then the $\mathcal{R}(C_{\phi})$ is dense in $\mathbb{H}^{p}(\mathbb{D}),~1\leq p<\infty.$
\end{theorem}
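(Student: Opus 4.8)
The plan is to deduce the density of $\mathcal{R}(C_\phi)$ from the single assertion that every monomial $z^m$ lies in its $\mathbb{H}^p$-closure. Since $C_\phi q=q\circ\phi$ for each polynomial $q$, the range $\mathcal{R}(C_\phi)$ contains $\operatorname{span}\{\phi^n:n\ge 0\}$; and since polynomials are dense in $\mathbb{H}^p(\mathbb{D})$ for $1\le p<\infty$ (approximate $f\in\mathbb{H}^p$ by the dilates $f(r\,\cdot\,)$, then these by their Taylor sections), it suffices to show $z^m\in\overline{\mathcal{R}(C_\phi)}$ for every $m\ge 0$. Write $\Omega=\phi(\mathbb{D})$, a Carath\'eodory domain; replacing $\phi$ by $\phi\circ\rho$ for a suitable rotation $\rho$ of $\mathbb{D}$ only composes $\mathcal{R}(C_\phi)$ with a surjective isometry of $\mathbb{H}^p(\mathbb{D})$, so we may assume $\phi'(0)>0$.

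Next I would invoke the characteristic feature of a Carath\'eodory domain: there is a decreasing sequence $\{\Omega_k\}$ of simply connected domains with $\overline{\Omega_{k+1}}\subset\Omega_k$ and $\overline{\Omega}\subset\Omega_k$ for all $k$, having $\Omega$ as their kernel with respect to $w_0=\phi(0)$; moreover $\widehat{\mathbb{C}}\setminus\overline{\Omega}$ is connected. Let $\phi_k:\mathbb{D}\to\Omega_k$ be the Riemann maps normalized by $\phi_k(0)=w_0$ and $\phi_k'(0)>0$. By the Carath\'eodory kernel convergence theorem, $\phi_k\to\phi$ uniformly on compact subsets of $\mathbb{D}$, hence $\phi_k'(0)\to\phi'(0)$. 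Put $\omega_k=\phi_k^{-1}\circ\phi$; this is well defined because $\phi(\mathbb{D})=\Omega\subset\Omega_k$, it maps $\mathbb{D}$ into $\mathbb{D}$ with $\omega_k(0)=0$ and $\phi=\phi_k\circ\omega_k$, so $a_k:=\omega_k'(0)=\phi'(0)/\phi_k'(0)\in(0,1]$ and $a_k\to 1$.

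The core is a two–stage approximation $z^m\approx\omega_k^m\approx q\circ\phi$. For the first stage, set $\psi_k=\omega_k/z$, holomorphic on $\mathbb{D}$ with $\psi_k(0)=a_k$ and $|\psi_k|\le 1$ by the Schwarz lemma; thus $1-\psi_k$ has range in the closed half–plane $\{\operatorname{Re}\zeta\ge 0\}$, while $g_k(z)=(1-a_k)\frac{1+z}{1-z}$ maps $\mathbb{D}$ conformally onto $\{\operatorname{Re}\zeta>0\}$ with $g_k(0)=1-a_k=(1-\psi_k)(0)$, so that $1-\psi_k\prec g_k$. By Littlewood's subordination principle, for any fixed $p_0\in(0,1)$,
\[
\|1-\psi_k\|_{\mathbb{H}^{p_0}}\le\|g_k\|_{\mathbb{H}^{p_0}}=(1-a_k)\,\bigl\|\tfrac{1+z}{1-z}\bigr\|_{\mathbb{H}^{p_0}}\longrightarrow 0 .
\]
Since $|1-\psi_k|\le 2$, writing $|\,\cdot\,|^{p}=|\,\cdot\,|^{p-p_0}|\,\cdot\,|^{p_0}$ in the boundary integral gives $\|1-\psi_k\|_{\mathbb{H}^{p}}^{p}\le 2^{\,p-p_0}\|1-\psi_k\|_{\mathbb{H}^{p_0}}^{p_0}\to 0$ for every $p\in[1,\infty)$. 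Finally, from $\omega_k^m-z^m=z^m(\psi_k^m-1)$, $\psi_k^m-1=(\psi_k-1)\sum_{j=0}^{m-1}\psi_k^{\,j}$, and $\bigl|\sum_{j=0}^{m-1}\psi_k^{\,j}\bigr|\le m$, we obtain $\|\omega_k^m-z^m\|_{\mathbb{H}^{p}}\le m\,\|1-\psi_k\|_{\mathbb{H}^{p}}\to 0$.

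For the second stage, $\phi_k^{-1}$ is holomorphic on the open set $\Omega_k\supset\overline{\Omega}$, so $(\phi_k^{-1})^m$ is holomorphic on a neighbourhood of the compact set $\overline{\Omega}$, whose complement in $\widehat{\mathbb{C}}$ is connected; hence Runge's theorem yields polynomials $q$ with $\sup_{\overline{\Omega}}\bigl|(\phi_k^{-1})^m-q\bigr|$ as small as we wish, and composing with $\phi$ (whose image is $\Omega$) gives $\|q\circ\phi-\omega_k^m\|_{\mathbb{H}^{p}}\le\sup_{\mathbb{D}}|q\circ\phi-\omega_k^m|\le\sup_{\overline{\Omega}}\bigl|q-(\phi_k^{-1})^m\bigr|$. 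Since $q\circ\phi=C_\phi q\in\mathcal{R}(C_\phi)$, combining the two stages shows that each $z^m$ is an $\mathbb{H}^p$–limit of elements of $\mathcal{R}(C_\phi)$, and the reduction of the first paragraph completes the proof. I expect the real difficulty to be the first stage, namely controlling $\|\omega_k^m-z^m\|_{\mathbb{H}^p}$ up to and including the endpoint $p=1$: mere locally uniform convergence $\omega_k\to z$ together with $\|\omega_k\|_\infty\le 1$ does not suffice (as $\omega_k(z)=z^k$ already shows), so one genuinely needs to use both the positive–real–part structure of $1-\psi_k$ (through subordination) and its uniform boundedness. The remaining ingredients — the Carath\'eodory kernel theorem, Runge's theorem, density of polynomials in $\mathbb{H}^p$, and Littlewood's subordination principle (the last already quoted above) — are classical.
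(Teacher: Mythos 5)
The paper offers no proof of this statement --- it is quoted verbatim from Caughran's 1971 paper --- so there is nothing internal to compare against; your proposal is, in outline and in most details, a correct reconstruction of Caughran's original argument (reduction to the monomials $z^m$, the exhausting sequence $\Omega_k$ with kernel $\Omega$, the self-maps $\omega_k=\phi_k^{-1}\circ\phi$ with $\omega_k'(0)\to 1$, and the two-stage estimate), and your first stage --- controlling $\|\omega_k^m-z^m\|_{\mathbb{H}^p}$ via the subordination of $1-\omega_k/z$ to $(1-a_k)\frac{1+z}{1-z}$ and then upgrading from $\mathbb{H}^{p_0}$, $p_0<1$, using the uniform bound $|1-\psi_k|\le 2$ --- is exactly the right mechanism and is carried out correctly.

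There is, however, one inaccurate claim in the second stage: it is \emph{not} true in general that $\widehat{\mathbb{C}}\setminus\overline{\Omega}$ is connected for a Carath\'eodory domain $\Omega$. The standard counterexample is the cornucopia: an open ribbon winding around the unit circle and accumulating on it from outside satisfies $\partial\Omega=\partial\Omega_\infty$ (so it is a Carath\'eodory domain), yet $\mathbb{C}\setminus\overline{\Omega}$ has the open unit disk as a bounded component. Consequently you cannot apply Runge's theorem to the compact set $\overline{\Omega}$ on the grounds you state. The step is nevertheless salvageable with the data you already have: $\overline{\Omega}$ is a compact subset of the simply connected domain $\Omega_k$, so every bounded component of $\mathbb{C}\setminus\overline{\Omega}$ is contained in $\Omega_k$ (its boundary lies in $\overline{\Omega}\subset\Omega_k$, and $\mathbb{C}\setminus\Omega_k$ is connected and unbounded, hence lies in the unbounded component of $\mathbb{C}\setminus\overline{\Omega}$); thus the polynomially convex hull of $\overline{\Omega}$ sits inside $\Omega_k$, and Runge's theorem for functions holomorphic on $\Omega_k$ does yield polynomials approximating $(\phi_k^{-1})^m$ uniformly on $\overline{\Omega}$. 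With that correction the argument is complete. You should also record, rather than merely assert, the classical fact you call the ``characteristic feature'': the existence of the decreasing sequence $\{\Omega_k\}$ of simply connected domains containing $\overline{\Omega}$ with kernel exactly $\Omega$ is the nontrivial input (due essentially to Carath\'eodory, and used by Farrell and Markushevich), and it is precisely where the Carath\'eodory hypothesis enters.
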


\begin{theorem}\rm({\cite{R.C.Roan,D.Sarason}}) \label{weak*generator}
	Let	$\phi:\mathbb{D} \to\mathbb{D}$ be an analytic map and $\phi$ is a weak* generator of $H^\infty$, then the $\mathcal{R}(C_{\phi})$ is dense in $\mathbb{H}^{p}(\mathbb{D}),~0<p<\infty.$
\end{theorem}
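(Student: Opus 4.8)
The plan is to establish the stronger assertion that the set $\mathcal{P}_{\phi}=\{p(\phi): p \text{ a polynomial}\}$ is already norm dense in $\mathbb{H}^{p}(\mathbb{D})$. Since $C_{\phi}(z^{n})=\phi^{n}$ and $C_{\phi}$ is linear, $\mathcal{P}_{\phi}\subseteq\mathcal{R}(C_{\phi})$ (here one only uses that $C_{\phi}$ maps $\mathbb{H}^{p}(\mathbb{D})$ into itself, which is Littlewood's subordination principle), so density of $\mathcal{P}_{\phi}$ immediately yields the theorem. I also record the reformulation of the hypothesis: because $\mathcal{P}_{\phi}$ is a unital subalgebra of $H^{\infty}$, its weak$^{*}$-closure is exactly the weak$^{*}$-closed subalgebra generated by $\phi$; thus "$\phi$ is a weak$^{*}$ generator of $H^{\infty}$" means precisely that $\mathcal{P}_{\phi}$ is weak$^{*}$-dense in $H^{\infty}$.

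I would first dispose of the range $1\le p<\infty$ by a duality argument. Let $\Lambda\in(\mathbb{H}^{p}(\mathbb{D}))^{*}$ annihilate $\mathcal{P}_{\phi}$; extending $\Lambda$ by Hahn--Banach from the closed subspace $\mathbb{H}^{p}\subseteq L^{p}(\mathbb{T})$ to $L^{p}(\mathbb{T})$ gives $g\in L^{p'}(\mathbb{T})$ (with $p'=\infty$ when $p=1$) such that $\Lambda(f)=\int_{\mathbb{T}}f\,\overline{g}\,dm$, where $m$ is normalized arc-length measure; in particular $\int_{\mathbb{T}}\phi^{n}\overline{g}\,dm=0$ for all $n\ge 0$. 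The decisive point is that $\overline{g}\in L^{1}(\mathbb{T})$, so $F\mapsto\int_{\mathbb{T}}F\overline{g}\,dm$ is a \emph{weak$^{*}$-continuous} linear functional on $H^{\infty}$; vanishing on the weak$^{*}$-dense set $\mathcal{P}_{\phi}$, it vanishes on all of $H^{\infty}$. Testing against $F=z^{n}$ gives $\widehat{g}(n)=0$ for every $n\ge 0$, so $g$ annihilates every monomial $z^{n}$, and hence, by continuity of $\Lambda$ and density of the polynomials in $\mathbb{H}^{p}$, annihilates all of $\mathbb{H}^{p}(\mathbb{D})$; that is, $\Lambda=0$. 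Therefore $\mathcal{P}_{\phi}$ is dense in $\mathbb{H}^{p}(\mathbb{D})$ for $1\le p<\infty$.

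For $0<p<1$ the space $\mathbb{H}^{p}(\mathbb{D})$ is not locally convex, so Hahn--Banach is unavailable, and I would instead bootstrap from the case $p=1$. On $\mathbb{T}$ one has $\|\cdot\|_{p}\le\|\cdot\|_{1}$, so the inclusion $\mathbb{H}^{1}(\mathbb{D})\hookrightarrow\mathbb{H}^{p}(\mathbb{D})$ is continuous; consequently $\overline{\mathcal{P}_{\phi}}^{\,\mathbb{H}^{p}}\supseteq\overline{\mathcal{P}_{\phi}}^{\,\mathbb{H}^{1}}=\mathbb{H}^{1}(\mathbb{D})$. Since $\mathbb{H}^{1}(\mathbb{D})$ contains all polynomials and the polynomials are dense in $\mathbb{H}^{p}(\mathbb{D})$, it follows that $\overline{\mathcal{P}_{\phi}}^{\,\mathbb{H}^{p}}=\mathbb{H}^{p}(\mathbb{D})$, completing the proof for all $0<p<\infty$.

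The polynomial-density facts and the identification of $(\mathbb{H}^{p})^{*}$ through boundary-value pairings are routine; the one genuinely load-bearing step — and the place I would be most careful — is the verification that a functional $F\mapsto\int_{\mathbb{T}}F\overline{g}\,dm$ with $\overline{g}\in L^{1}(\mathbb{T})$ is weak$^{*}$-continuous on $H^{\infty}$ (equivalently, that it factors through the canonical predual $L^{1}(\mathbb{T})/H^{1}_{0}$ of $H^{\infty}$). It is exactly this that lets the weak$^{*}$-density of $\mathcal{P}_{\phi}$ — rather than a much stronger norm density, which would generally fail — be brought to bear, and it is the hinge of the whole argument.
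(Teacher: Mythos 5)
This statement is one of the paper's quoted preliminaries (Theorem \ref{weak*generator} is cited to Roan and Sarason), so the paper itself supplies no proof to compare against; your proposal is therefore judged on its own terms, and it is correct. It is also essentially the argument of the cited sources: reduce to showing the polynomials in $\phi$ are dense, kill an annihilating functional by representing it (after a Hahn--Banach extension from $\mathbb{H}^p\subseteq L^p(\mathbb{T})$) by a $g\in L^{p'}(\mathbb{T})\subseteq L^1(\mathbb{T})$, observe that pairing against an $L^1$ function is precisely a $\sigma(L^\infty,L^1)$-continuous functional on $H^\infty$ so that weak$^*$-density of $\{q(\phi)\}$ forces it to annihilate all of $H^\infty$ and in particular every monomial $z^n$, and conclude from norm-density of the polynomials in $\mathbb{H}^p$. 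You correctly identify the load-bearing step (weak$^*$-continuity of the pairing, i.e., that the hypothesis on $\phi$ is exactly calibrated to the predual $L^1/H^1_0$), and your bootstrap from $p=1$ to $0<p<1$ via the contractive inclusion $\mathbb{H}^1\hookrightarrow\mathbb{H}^p$ and idempotence of closure is a clean way to handle the non--locally-convex range where duality is unavailable. The only cosmetic blemish is the labelling of $\int_{\mathbb{T}}z^n\overline{g}\,dm$ as $\widehat{g}(n)$, which is off by a conjugation/sign convention but does not affect the conclusion that $\Lambda$ vanishes on all polynomials.
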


\begin{definition}\rm({\cite{Zimmer}})\label{anurag5}
	Let $(X,\Omega,\mu)$ be a measure space and $Y$ denote a topological space. Let $f:X\to Y$ be a measurable function. An element $y\in Y$ is in the essential range of $f $ if for any neighborhood $U$ of $y$, $\mu(f^{-1}(U))>0.$ The set of all such elements is called the essential range of $f$ and is denoted by ess.range$(f).$ 
\end{definition}
\begin{definition}\rm{\cite{Bollobas,Sheldom,Royden}}
	Suppose $(X,\Omega,\mu)$ is a $\sigma-$finite measure space and $1\leq p<\infty.$ Let $L^p(\mu)=\{f:X\to \mathbb{C} : f \textrm{ is measurable and } \int_{X}\vert f\vert^{p}d\mu<\infty \}$ and $L^{\infty}(\mu)=\{f:X\to \mathbb{C} : f \textrm{ is measurable and ess.sup. } \vert f\vert<\infty\}.$ Note that
	\begin{enumerate}
		\item[(i)] For $1\leq p<\infty,~L^p(\mu)$ is a Banach space under the $L^p-$norm defined as $$\vert\vert f\vert\vert_p=(\int \vert f\vert^p d\mu)^\frac{1}{p}~\forall~f\in L^p(\mu).$$ 
		\item[(ii)] $L^{\infty}(\mu)$ is a Banach algebra with identity under the norm defined as $$\|f\|=\|f\|_\infty=\inf \{K:\mu\{t:\vert f(t)\vert>K\}=0\}~\forall~f\in L^{\infty}(\mu).$$   
	\end{enumerate} 
\end{definition}
\begin{lemma}\rm  {\cite{Sheldom,RGDoug,Zimmer}}\label{anurag6}
	Suppose $(X,\Omega,\mu)$ is a measure space and $h\in L^\infty(\mu).$ For $1\leq p\leq\infty,$ define the multiplication operator  $M_h:L^p(\mu)\to L^p(\mu)$ by $M_h(f)(x)=h(x) f(x)~\forall~ x\in X$. Then $M_h\in \mathcal{B}(L^p(\mu))$ and 
	\begin{enumerate}
		\item [(i)] $\textrm{ess.range}(h)=\sigma (M_h)=\{\lambda \in \mathbb{C}:\forall~\epsilon>0,\mu (\{x\in X: \vert h(x)-\lambda\vert<\epsilon\})>0 \}$
		\item [(ii)] $\sigma_{p}(M_h)=\{\lambda \in \mathbb{C}:\mu(\{x\in X:h(x)=\lambda\})>0\}.$
	\end{enumerate}
	
\end{lemma}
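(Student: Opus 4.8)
The plan is to check boundedness first and then establish the two spectral identities separately, running the cases $1\le p<\infty$ and $p=\infty$ in parallel. Boundedness is immediate: for $f\in L^p(\mu)$ one has $|hf|\le\|h\|_\infty|f|$ $\mu$-a.e., hence $\|M_hf\|_p\le\|h\|_\infty\|f\|_p$, so $M_h\in\mathcal B(L^p(\mu))$ with $\|M_h\|\le\|h\|_\infty$. Throughout I would freely use the standard fact that an invertible element of $\mathcal B(L^p(\mu))$ is bounded below.

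For part (i), I would first observe that the set $\{\lambda\in\mathbb C:\forall\,\epsilon>0,\ \mu(\{x:|h(x)-\lambda|<\epsilon\})>0\}$ is literally $\mathrm{ess.range}(h)$ in the sense of Definition \ref{anurag5}, since the open balls $B(\lambda,\epsilon)$ form a neighbourhood base at $\lambda$ in $\mathbb C$; so the content of (i) is the identity $\sigma(M_h)=\mathrm{ess.range}(h)$. If $\lambda\notin\mathrm{ess.range}(h)$, choose $\epsilon>0$ with $\mu(\{x:|h(x)-\lambda|<\epsilon\})=0$; then $g:=1/(h-\lambda)$ is defined a.e. with $\|g\|_\infty\le1/\epsilon$, so $M_g\in\mathcal B(L^p(\mu))$ and $M_g(M_h-\lambda I)=(M_h-\lambda I)M_g=I$, giving $\lambda\notin\sigma(M_h)$. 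Conversely, if $\lambda\in\mathrm{ess.range}(h)$, then for each $n$ the set $E_n=\{x:|h(x)-\lambda|<1/n\}$ has positive measure; using $\sigma$-finiteness pick $F_n\subseteq E_n$ with $0<\mu(F_n)<\infty$ and put $f_n=\chi_{F_n}/\|\chi_{F_n}\|_p$, so $\|f_n\|_p=1$. A direct estimate gives $\|(M_h-\lambda I)f_n\|_p\le1/n\to0$, so $M_h-\lambda I$ is not bounded below and hence not invertible, i.e. $\lambda\in\sigma(M_h)$.

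For part (ii), note that $\lambda\in\sigma_p(M_h)$ means there is a nonzero $f\in L^p(\mu)$ with $(h-\lambda)f=0$ a.e., equivalently $f=0$ a.e. on $\{x:h(x)\ne\lambda\}$. If $\mu(\{x:h(x)=\lambda\})>0$, choose (by $\sigma$-finiteness) a subset $F$ with $0<\mu(F)<\infty$; then $\chi_F$ is a nonzero eigenvector of $M_h$ for $\lambda$, so $\lambda\in\sigma_p(M_h)$. Conversely, if $\mu(\{x:h(x)=\lambda\})=0$, then any $f$ with $(h-\lambda)f=0$ a.e. must vanish a.e., so $\lambda\notin\sigma_p(M_h)$.

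The only points requiring a little care are the appeal to $\sigma$-finiteness to replace a set of positive measure by one of finite positive measure (so that its indicator lies in $L^p$ for $p<\infty$ and can be normalised), and writing the approximate-eigenvector estimate in part (i) uniformly across all $p$, including $p=\infty$ where $\|\chi_{F_n}\|_\infty=1$. Neither is a genuine obstacle; beyond these the proof is a routine unwinding of the definitions of spectrum, point spectrum, and essential range.
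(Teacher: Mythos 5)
Your proof is correct. The paper does not actually prove Lemma \ref{anurag6} --- it is stated in the Preliminaries and attributed to the cited references --- so there is no in-paper argument to compare against; what you give is the standard textbook derivation: $\|M_hf\|_p\le\|h\|_\infty\|f\|_p$ for boundedness, inversion by $M_{1/(h-\lambda)}$ off the essential range, normalised indicators of the sets $\{|h-\lambda|<1/n\}$ as approximate eigenvectors on it, and indicators of $\{h=\lambda\}$ for the point spectrum. Your explicit appeal to $\sigma$-finiteness to extract subsets of finite positive measure is the right precaution: the lemma as printed says only ``measure space,'' but the paper's standing definition of $L^p(\mu)$ assumes $\sigma$-finiteness, and without some such hypothesis the indicator construction for $p<\infty$ would indeed break down.
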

\begin{lemma}\rm {\cite{Anurag}}\label{anurag11}
	Let $\mathfrak{B}$ be a commutative Banach algebra and $x\in \mathfrak{B}$ be a TDZ. Then for each $y\in \mathfrak{B},\ xy$ is also a TDZ.
\end{lemma}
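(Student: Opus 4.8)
The plan is to argue directly from the definition of a topological divisor of zero, using commutativity to collapse the left-sided and right-sided conditions into one. Since $x$ is a TDZ in $\mathfrak{B}$, there is a sequence $\{x_n\}_{n=1}^\infty$ of elements of $\mathfrak{B}$ with $\|x_n\|=1$ for all $n$ and such that $x x_n \to 0$ or $x_n x \to 0$; because $\mathfrak{B}$ is commutative these two conditions coincide, so without loss of generality we may take $x x_n \to 0$.

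Next I would test this very same sequence $\{x_n\}$ against the element $xy$. Using commutativity and associativity of the product, $(xy)x_n = y(x x_n)$ for every $n$, and since the norm on a Banach algebra is submultiplicative we get $\|(xy)x_n\| = \|y(x x_n)\| \le \|y\|\,\|x x_n\| \to 0$. Thus $(xy)x_n \to 0$ while $\|x_n\| = 1$ for all $n$, which is exactly the assertion that $xy$ is a TDZ in $\mathfrak{B}$. Since $y \in \mathfrak{B}$ was arbitrary, this finishes the proof.

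There is no real obstacle in this argument; the only point worth a word is the degenerate case $xy = 0$. But any Banach algebra possessing a TDZ is necessarily nonzero (it contains norm-one elements), so $0$ is trivially a TDZ, witnessed by any norm-one sequence, and the computation above already handles this case verbatim. Hence the conclusion holds for every $y \in \mathfrak{B}$.
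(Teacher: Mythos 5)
Your argument is correct and is the standard proof of this fact; the paper itself states this lemma as a citation from an earlier work and gives no proof, but the computation $(xy)x_n = y(xx_n)$ together with submultiplicativity of the norm is exactly the intended argument. Your remark on the degenerate case $xy=0$ is harmless and already covered by the main computation.
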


\begin{theorem}\rm {\cite{Anurag}}\label{anurag10}	
	Let $\mathcal{A}=\mathcal{A}(\mathbb{D})$ be the disk algebra. Let $f(z)=\left(\frac{z-z_0}{2}\right)$ for some $z_0 \in \mathbb{C}$. Then $f$ is TDZ in $\mathcal{A}$ if and only if $\vert z_0\vert=1.$
\end{theorem}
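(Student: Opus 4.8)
My plan is to prove the two implications separately, using throughout that $\mathcal{A}(\mathbb{D})$ is commutative (so the conditions $ff_n\to 0$ and $f_nf\to 0$ coincide) and that, by the maximum modulus principle, $\|g\|_\infty=\sup_{z\in\mathbb{T}}|g(z)|$ for every $g\in\mathcal{A}(\mathbb{D})$.

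For the direction ``$|z_0|=1\Rightarrow f$ is a TDZ'', I would exhibit an explicit norm-one sequence $\{f_n\}$ with $\|ff_n\|_\infty\to 0$. The idea is to take a function that peaks at $z_0$ and decays away from it; since $|z_0|=1$ a convenient choice is the polynomial
\[
f_n(z)=\left(\frac{1+\overline{z_0}\,z}{2}\right)^{n}\in\mathcal{A}(\mathbb{D}).
\]
Because $|1+\overline{z_0}z|\le 1+|z|\le 2$ on $\overline{\mathbb{D}}$ with equality exactly at $z=z_0$, one gets $\|f_n\|_\infty=1$. For the product I would pass to $\mathbb{T}$ by maximum modulus and substitute $z=z_0e^{i\theta}$; using $|z_0|=1$ this yields $|f(z)f_n(z)|=|\sin(\theta/2)|\,|\cos(\theta/2)|^{n}$, and an elementary one-variable maximization (critical point at $\tan^2(\theta/2)=1/n$) bounds this by $(n+1)^{-1/2}\to 0$. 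Equivalently, one may first rotate the variable so that $z_0=1$ and use $f_n(z)=((1+z)/2)^n$, since multiplying $f$ by a unimodular constant leaves the TDZ property unchanged.

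For the converse I would argue by contrapositive: if $|z_0|\ne 1$ then $f(z)=\tfrac{z-z_0}{2}$ has no zero on the compact set $\mathbb{T}$, so $c:=\min_{z\in\mathbb{T}}|f(z)|>0$. Then for every $g\in\mathcal{A}(\mathbb{D})$ with $\|g\|_\infty=1$ the maximum modulus principle gives
\[
\|fg\|_\infty=\sup_{z\in\mathbb{T}}|f(z)|\,|g(z)|\ \ge\ c\,\sup_{z\in\mathbb{T}}|g(z)|\ =\ c,
\]
so no norm-one sequence can force $fg_n\to 0$, and commutativity rules out the other side as well; hence $f$ is not a TDZ. (When $|z_0|>1$ this is also immediate because $f$ is then invertible in $\mathcal{A}(\mathbb{D})$; the point of the displayed estimate is that it simultaneously handles $|z_0|<1$, where $f$ is singular yet still not a TDZ.)

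I expect the only genuine obstacle to lie in the first direction: choosing the sequence $\{f_n\}$ that concentrates at the boundary zero and verifying that $|ff_n|$ decays uniformly on all of $\overline{\mathbb{D}}$. The converse is a short compactness-plus-maximum-modulus estimate. The same ``high power of a peak function'' device should extend, with minor changes, to proving that an arbitrary $f\in\mathcal{A}(\mathbb{D})$ with a zero on $\mathbb{T}$ is a TDZ.
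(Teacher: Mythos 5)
Your argument is correct. Note, however, that the paper does not prove this statement at all: Theorem \ref{anurag10} is imported from the reference \cite{Anurag} and used as a black box in the proof of Theorem \ref{TDZ in A(D)}, so there is no in-paper proof to compare against. Your proof is the natural self-contained one. Both directions check out: for $|z_0|=1$ the peak functions $f_n(z)=\bigl(\tfrac{1+\overline{z_0}z}{2}\bigr)^n$ have norm $1$ with the maximum attained only at $z_0$, and the computation $|f(z_0e^{i\theta})f_n(z_0e^{i\theta})|=|\sin(\theta/2)||\cos(\theta/2)|^n\le (n+1)^{-1/2}$ is right; for $|z_0|\neq 1$ the bound $\|fg\|\ge \bigl(\min_{\mathbb{T}}|f|\bigr)\|g\|$ via the maximum modulus principle correctly rules out the TDZ property in both the invertible case $|z_0|>1$ and the singular-but-not-TDZ case $|z_0|<1$. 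Your closing remark is also on point: the same peak-function device, combined with Mergelyan approximation, is essentially how the paper derives the general characterization in Theorem \ref{TDZ in A(D)}, except that there the authors instead factor approximating polynomials as $(z-z_0)r_n(z)$ and invoke Lemma \ref{anurag11} and the closedness of the set of TDZ (Theorem \ref{anurag4}) rather than redoing the estimate.
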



\begin{definition}\rm {\cite{RKM,RK}}
	If $(X,\Omega,\mu)$ is a $\sigma-\text{finite}$ measure space and $\phi$ is a non-singular measurable transformation on $X$ into itself, then the composition transformation $C_\phi$ on $L^p(\mu)~(1\leq p<\infty)$ is defined by the following equation.$$ C_\phi f=f\circ\phi~\text{for every}~f\in L^p(\mu).$$ 
	If $C_\phi \in \mathcal{B}(L^p(\mu)),$ the Banach algebra of all bounded linear operators on $L^p(\mu),$ then it is called a composition operator induced by $\phi.$\\
	\begin{theorem}\rm {\cite{RKM}}
		A measurable transformation $\phi:X \to X$  defines a composition operator on $L^p(\mu)~(1\leq p<\infty)$ if and only if $\mu\phi^{-1}\ll \mu$ and $\frac{d\mu \phi^{-1}}{d\mu}\in L^\infty(\mu),$ where $\frac{d\mu \phi^{-1}}{d\mu}$ is Radon-Nikodym derivative of $\mu \phi^{-1}$ with respect to $\mu.$ In this case \begin{align*}
			\|C_\phi\|=\|\frac{d\mu \phi^{-1}}{d\mu}\|_{\infty}^{\frac{1}{p}}.
		\end{align*}
	\end{theorem}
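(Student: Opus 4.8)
The plan is to derive everything from one change-of-variables identity together with the Radon--Nikodym theorem. \emph{Step 1.} First I would prove that for every measurable $g\colon X\to[0,\infty]$,
\[
\int_X (g\circ\phi)\,d\mu=\int_X g\,d(\mu\phi^{-1}),
\]
where $\mu\phi^{-1}$ is the measure $E\mapsto\mu(\phi^{-1}(E))$ on $\Omega$. This is the usual bootstrapping: for $g=\chi_E$ it holds because $\chi_E\circ\phi=\chi_{\phi^{-1}(E)}$ pointwise; it extends to non-negative simple functions by linearity, and then to arbitrary non-negative measurable $g$ by the monotone convergence theorem. I also note that $\mu\phi^{-1}\ll\mu$ makes $C_\phi$ well defined on equivalence classes, since $\mu(\{f\neq g\})=0$ forces $\mu\bigl(\phi^{-1}(\{f\neq g\})\bigr)=\mu\phi^{-1}(\{f\neq g\})=0$.

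\emph{Step 2 (necessity).} Suppose $C_\phi\in\mathcal{B}(L^p(\mu))$ with $\|C_\phi\|=M$. Testing on indicator functions: if $\mu(E)<\infty$ then $\chi_E\in L^p(\mu)$ and $C_\phi\chi_E=\chi_{\phi^{-1}(E)}\in L^p(\mu)$, so $\mu(\phi^{-1}(E))<\infty$ and boundedness gives $\mu(\phi^{-1}(E))=\|C_\phi\chi_E\|_p^p\le M^p\|\chi_E\|_p^p=M^p\mu(E)$; for $\mu(E)=\infty$ this is trivial. Hence $\mu\phi^{-1}\le M^p\mu$ as measures, which in particular gives $\mu\phi^{-1}\ll\mu$ and that $\mu\phi^{-1}$ is $\sigma$-finite, so $f_0:=\tfrac{d\mu\phi^{-1}}{d\mu}$ exists by Radon--Nikodym. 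From $\int_E f_0\,d\mu=\mu\phi^{-1}(E)\le M^p\mu(E)$ for every $E\in\Omega$, together with the $\sigma$-finiteness of $\mu$, one concludes $f_0\le M^p$ $\mu$-a.e., i.e.\ $f_0\in L^\infty(\mu)$ with $\|f_0\|_\infty^{1/p}\le\|C_\phi\|$.

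\emph{Step 3 (sufficiency and the norm formula).} Conversely, assume $\mu\phi^{-1}\ll\mu$ and $f_0:=\tfrac{d\mu\phi^{-1}}{d\mu}\in L^\infty(\mu)$. By Step 1 applied to $g=|f|^p$,
\[
\|C_\phi f\|_p^p=\int_X|f\circ\phi|^p\,d\mu=\int_X|f|^p\,d(\mu\phi^{-1})=\int_X|f|^p f_0\,d\mu\le\|f_0\|_\infty\,\|f\|_p^p,
\]
for all $f\in L^p(\mu)$; so $C_\phi$ maps $L^p(\mu)$ into itself, is clearly linear, and $\|C_\phi\|\le\|f_0\|_\infty^{1/p}$. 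Combining with Step 2 yields $\|C_\phi\|=\|f_0\|_\infty^{1/p}=\bigl\|\tfrac{d\mu\phi^{-1}}{d\mu}\bigr\|_\infty^{1/p}$.

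The content here is routine measure theory, so the real work is careful bookkeeping at two points: justifying the change-of-variables identity (the only subtlety being the passage through simple functions and the monotone convergence step), and upgrading the integrated inequality $\int_E f_0\,d\mu\le M^p\mu(E)$, valid for all $E$, to the pointwise essential estimate $f_0\le M^p$ --- this is exactly where $\sigma$-finiteness of $\mu$ is used, and it is also what rules out any pathology in invoking Radon--Nikodym for the a priori possibly non-$\sigma$-finite measure $\mu\phi^{-1}$.
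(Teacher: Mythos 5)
Your proposal is correct, but note that the paper does not prove this statement at all: it is quoted verbatim from Singh and Manhas \cite{RKM} as a known preliminary, so there is no in-paper argument to compare against. Your two-sided argument --- the change-of-variables identity $\int_X (g\circ\phi)\,d\mu=\int_X g\,d(\mu\phi^{-1})$ for the upper bound, and testing on indicators of finite-measure sets plus $\sigma$-finiteness to upgrade $\int_E f_0\,d\mu\le M^p\mu(E)$ to $f_0\le M^p$ a.e.\ for the lower bound --- is precisely the standard proof given in that reference, and all the steps check out.
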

	When $X=\mathbb{N}$	and $\mu$ is the counting measure on $\mathbb{N},$ then $L^p(\mu)$ becomes $\ell^p.$ In this setting we have the following result.
\end{definition}
\begin{theorem}\rm {\cite{RK}}
	Let $\phi:\mathbb{N}\to \mathbb{N}$ be a self map. Then the composition transformation $C_\phi$ induced by $\phi$ on $\ell^p~(1\leq p<\infty)$ and defined as $$C_\phi(f)=\sum_{n=1}^{\infty}f(n)\mychi_{\phi^{-1}(n)}$$ is a bounded  linear operator if and only if the set $\{|\phi^{-1}(n)|: n\in \mathbb{N}\}$ is bounded.
\end{theorem}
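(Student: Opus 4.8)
The plan is to reduce everything to one elementary computation: expressing $\|C_\phi f\|_p$ by regrouping the coordinates of $\mathbb{N}$ according to the fibers $\phi^{-1}(n),\ n\in\mathbb{N}$, which partition $\mathbb{N}$ because $\phi$ is a function. Throughout I will use that, by the defining formula, $C_\phi f = f\circ\phi$, since $\sum_{n}f(n)\mychi_{\phi^{-1}(n)}(k)=f(\phi(k))$ for each $k\in\mathbb{N}$.

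First I would prove the ``if'' direction. Assume $M:=\sup\{|\phi^{-1}(n)|:n\in\mathbb{N}\}<\infty$ and fix $f\in\ell^p$. Then
$$\|C_\phi f\|_p^p=\sum_{k\in\mathbb{N}}|f(\phi(k))|^p .$$
Since $\mathbb{N}=\bigsqcup_{n\in\mathbb{N}}\phi^{-1}(n)$ and all summands are non-negative (so the regrouping is valid), the right-hand side equals
$$\sum_{n\in\mathbb{N}}\ \sum_{k\in\phi^{-1}(n)}|f(n)|^p=\sum_{n\in\mathbb{N}}|\phi^{-1}(n)|\,|f(n)|^p\le M\sum_{n\in\mathbb{N}}|f(n)|^p=M\|f\|_p^p .$$
Hence $C_\phi f\in\ell^p$ with $\|C_\phi f\|_p\le M^{1/p}\|f\|_p$; linearity is immediate from the formula, so $C_\phi$ is a bounded linear operator and $\|C_\phi\|\le M^{1/p}$.

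Next I would prove the ``only if'' direction by testing $C_\phi$ on the standard unit vectors $e_n=\mychi_{\{n\}}$, which satisfy $\|e_n\|_p=1$. From the formula, $C_\phi e_n=\mychi_{\phi^{-1}(n)}$. If $\phi^{-1}(n)$ were infinite then $\mychi_{\phi^{-1}(n)}\notin\ell^p$ (its $\ell^p$-norm is $+\infty$ when $p<\infty$), contradicting that $C_\phi$ maps into $\ell^p$; so every fiber is finite and $\|C_\phi e_n\|_p=|\phi^{-1}(n)|^{1/p}$. Boundedness then gives $|\phi^{-1}(n)|^{1/p}=\|C_\phi e_n\|_p\le\|C_\phi\|$ for every $n$, i.e. $|\phi^{-1}(n)|\le\|C_\phi\|^p$, so the set $\{|\phi^{-1}(n)|:n\in\mathbb{N}\}$ is bounded. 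Combining the two directions also yields the precise value $\|C_\phi\|=M^{1/p}$, consistent with the Radon--Nikodym formula for the counting measure, where $\frac{d\mu\phi^{-1}}{d\mu}(n)=|\phi^{-1}(n)|$.

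There is no serious obstacle here: the whole argument hinges on the fiber decomposition of $\mathbb{N}$. The only points requiring a word of care are the interchange/regrouping of the double series, which is legitimate precisely because every term is non-negative, and, in the necessity part, first ruling out infinite fibers before writing $\|C_\phi e_n\|_p=|\phi^{-1}(n)|^{1/p}$.
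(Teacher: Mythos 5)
Your proof is correct. Note that the paper itself gives no proof of this statement --- it is quoted from Singh--Komal \cite{RK} as a known result --- so there is nothing internal to compare against; your fiber-decomposition argument (regrouping $\sum_k|f(\phi(k))|^p$ over the partition $\mathbb{N}=\bigsqcup_n\phi^{-1}(n)$ for sufficiency, testing on the unit vectors $e_n$ for necessity, with the correct preliminary step of ruling out infinite fibers) is the standard one, and it also recovers the norm identity $\|C_\phi\|=\sup_n|\phi^{-1}(n)|^{1/p}$ consistent with the Radon--Nikodym formula stated just before this theorem in the paper.
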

\begin{theorem}\rm {\cite{RKM}} \label{RN derivative}
	Let $\phi:X \to X$ be a measurable transformation and let $C_{\phi}$ be the composition operator induced by $\phi$ on $L^2(\mu).$ Then $C_{\phi}^*C_{\phi}=M_\frac{d\mu \phi^{-1}}{d\mu},$ where $\frac{d\mu \phi^{-1}}{d\mu}$ is Radon-Nikodym derivative of $\mu \phi^{-1}$ with respect to $\mu.$
\end{theorem}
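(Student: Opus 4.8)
The plan is to establish the operator identity weakly, by testing both sides against arbitrary vectors of $L^2(\mu)$ and reducing everything to the change-of-variables formula for the push-forward measure $\mu\phi^{-1}$ together with the defining property of the Radon--Nikodym derivative.

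First I would fix $f,g\in L^2(\mu)$ and use the definition of the Hilbert-space adjoint and of $C_\phi$ to write
\begin{equation*}
\langle C_\phi^{*}C_\phi f,\,g\rangle=\langle C_\phi f,\,C_\phi g\rangle=\int_{X}(f\circ\phi)\,\overline{(g\circ\phi)}\,d\mu=\int_{X}(f\bar g)\circ\phi\,d\mu,
\end{equation*}
the inner integral being finite since $f\bar g\in L^{1}(\mu)$ by the Cauchy--Schwarz inequality. The next step is the image-measure theorem: for every $\mu\phi^{-1}$-integrable $h$ one has $\int_{X}(h\circ\phi)\,d\mu=\int_{X}h\,d(\mu\phi^{-1})$. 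I would prove this in the standard layered fashion — it holds for $h=\chi_{E}$ by the very definition $\mu\phi^{-1}(E)=\mu(\phi^{-1}(E))$, extends to nonnegative simple $h$ by linearity, to nonnegative measurable $h$ by monotone convergence, and to complex $h$ by decomposition into real/imaginary and positive/negative parts. Applying it with $h=f\bar g$ and writing $w:=\frac{d\mu\phi^{-1}}{d\mu}$, the definition of the Radon--Nikodym derivative gives
\begin{equation*}
\int_{X}(f\bar g)\circ\phi\,d\mu=\int_{X}f\bar g\,d(\mu\phi^{-1})=\int_{X}(f\bar g)\,w\,d\mu=\int_{X}(wf)\,\bar g\,d\mu=\langle M_{w}f,\,g\rangle.
\end{equation*}

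Since $f,g\in L^{2}(\mu)$ were arbitrary and $M_{w}\in\mathcal{B}(L^{2}(\mu))$ by the preceding theorem (because $w\in L^{\infty}(\mu)$, which is exactly the condition forcing $C_\phi$ to be bounded), I would conclude $C_\phi^{*}C_\phi=M_{w}=M_{\frac{d\mu\phi^{-1}}{d\mu}}$. The one place that needs care is the integrability bookkeeping in the change-of-variables step: one must check that $f\bar g$, which lies in $L^{1}(\mu)$, actually lies in $L^{1}(\mu\phi^{-1})$ so that the image-measure formula applies — and this is guaranteed precisely because $\mu\phi^{-1}\ll\mu$ with bounded density $w$, so that $\int_{X}|f\bar g|\,d(\mu\phi^{-1})=\int_{X}|f\bar g|\,w\,d\mu\le\|w\|_{\infty}\,\|f\|_{2}\,\|g\|_{2}<\infty$. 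Everything else is a routine unwinding of definitions, and as a sanity check the resulting operator $M_{w}$ is positive and self-adjoint, consistent with its being of the form $C_\phi^{*}C_\phi$.
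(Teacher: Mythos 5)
Your argument is correct. Note, however, that the paper does not prove this statement at all: it is quoted from Singh--Manhas \cite{RKM} as a known result, so there is no in-paper proof to compare against. Your proof is the standard one --- reduce the operator identity to the weak identity $\langle C_\phi f, C_\phi g\rangle=\langle M_w f,g\rangle$, push the integral forward via the image-measure (change-of-variables) formula, and invoke the defining property of the Radon--Nikodym derivative $w=\frac{d\mu\phi^{-1}}{d\mu}$ --- and you handle the only delicate point (that $f\bar g\in L^1(\mu\phi^{-1})$, via $\|w\|_\infty\|f\|_2\|g\|_2<\infty$) correctly. One small remark: the finiteness of $\int_X (f\bar g)\circ\phi\,d\mu$ is most directly seen from the fact that $C_\phi f$ and $C_\phi g$ both lie in $L^2(\mu)$ (boundedness of $C_\phi$), so their product is in $L^1(\mu)$ by Cauchy--Schwarz; the membership $f\bar g\in L^1(\mu)$ by itself is not the relevant fact there, though your later accounting via the density $w$ closes this gap anyway.
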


\begin{theorem}\rm {\cite{RK,RKM}}\label{anurag7}
	Let $\phi$ be a self map on $\mathbb{N}$ and $C_{\phi}$ be the composition operator induced by $\phi$ on $\ell^p~(1\leq p<\infty).$ Then,\begin{enumerate}
		\item [$(1)$] $C_{\phi}$ is one-one if and only if $\phi$ is onto;
		\item [$(2)$] $C_{\phi}$ is onto if and only if $\phi$ is one-one;
		\item [$(3)$] $C_{\phi}$ is invertible if and only if $\phi$ is invertible;
	\end{enumerate} 
\end{theorem}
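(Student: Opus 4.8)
The plan is to reduce everything to the action of $C_\phi$ on the standard basis $\{e_n\}_{n\in\mathbb{N}}$ of $\ell^p$, where $e_n(k)=\delta_{nk}$. Since $(C_\phi f)(k)=f(\phi(k))$, one has $C_\phi e_n=\chi_{\phi^{-1}(n)}=\sum_{k\in\phi^{-1}(n)}e_k$, which is consistent with the series formula recorded above. I would prove $(1)$ and $(2)$ by hand and then deduce $(3)$ formally from them.

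For $(1)$: if $\phi$ is onto, I would show $C_\phi f=0$ forces $f=0$ by choosing, for each $n$, a point $k_n\in\phi^{-1}(n)$, so that $f(n)=f(\phi(k_n))=(C_\phi f)(k_n)=0$ for all $n$. Conversely, if $\phi$ misses some $n_0$, then $\phi^{-1}(n_0)=\emptyset$, hence $C_\phi e_{n_0}=\chi_{\phi^{-1}(n_0)}=0$ while $e_{n_0}\neq 0$, so $C_\phi$ is not injective; taking the contrapositive closes the equivalence.

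For $(2)$: if $\phi$ is one-one, I would construct a preimage of an arbitrary $g\in\ell^p$ explicitly by setting $f(n)=g\bigl(\phi^{-1}(n)\bigr)$ for $n\in\phi(\mathbb{N})$ and $f(n)=0$ otherwise. Because $k\mapsto\phi(k)$ is a bijection of $\mathbb{N}$ onto $\phi(\mathbb{N})$, a rearrangement of nonnegative terms gives $\|f\|_p=\|g\|_p$, so $f\in\ell^p$ and $C_\phi f=g$; thus $C_\phi$ is onto. Conversely, if $\phi(k_1)=\phi(k_2)$ with $k_1\neq k_2$, then $(C_\phi f)(k_1)=(C_\phi f)(k_2)$ for every $f$, so the range of $C_\phi$ is confined to the proper closed subspace $\{g\in\ell^p:g(k_1)=g(k_2)\}$ and $C_\phi$ is not onto; the contrapositive again gives the stated equivalence.

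For $(3)$: I would invoke the bounded inverse theorem, which says that a bounded operator on a Banach space is regular (invertible) in $\mathcal{B}(\ell^p)$ precisely when it is a bijection. By $(1)$ and $(2)$, $C_\phi$ is bijective iff $\phi$ is simultaneously onto and one-one, i.e. a bijection of $\mathbb{N}$, which is exactly what invertibility means for a self-map; in that case $\phi^{-1}$ is again a self-map of $\mathbb{N}$ and $C_\phi^{-1}=C_{\phi^{-1}}$. I do not expect any serious obstacle. The only points needing a little care are verifying that the function $f$ built in $(2)$ genuinely lies in $\ell^p$ (the rearrangement step, which uses injectivity of $\phi$), and observing that boundedness of $C_\phi$ is not an extra hypothesis in the relevant cases, since $\phi$ one-one already forces $|\phi^{-1}(n)|\le 1$ for all $n$.
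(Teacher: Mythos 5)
Your argument is correct and complete. Note, however, that the paper itself offers no proof of this statement: it is quoted as a known result from the cited references of Singh--Komal and Singh--Manhas, so there is no internal proof to compare against. Your reduction to the basis vectors $e_n$ handles both directions of $(1)$ and $(2)$ cleanly --- in particular the explicit preimage $f(n)=g(\phi^{-1}(n))$ with the rearrangement identity $\|f\|_p=\|g\|_p$, and the observation that a non-injective $\phi$ traps $\mathcal{R}(C_\phi)$ inside the proper closed subspace $\{g: g(k_1)=g(k_2)\}$ --- and $(3)$ then follows from $(1)$, $(2)$ and the bounded inverse theorem (or, as you note, directly from $C_\phi^{-1}=C_{\phi^{-1}}$). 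Your closing remark that injectivity of $\phi$ automatically yields $|\phi^{-1}(n)|\le 1$, hence boundedness of the constructed inverse data, is exactly the point that needed checking.
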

\section{Main results} \label{sec3}
\subsection{TDZ in $\mathcal{A}(\mathbb{D})$} \label{sec3.1}\

In the following theorem we characterize the TDZ in $\mathcal{A}(\mathbb{D}).$
\begin{theorem}\label{TDZ in A(D)}
	An element $f\in \mathcal{A}(\mathbb{D})$ is a TDZ if and only if there exists $z_0 \in \mathbb{T}$ such that $f(z_0)=0.$
\end{theorem}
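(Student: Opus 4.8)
The plan is to prove both directions, with the forward direction (TDZ $\Rightarrow$ a zero on $\mathbb{T}$) handled by contrapositive and the reverse direction by an explicit construction that reduces to Theorem~\ref{anurag10}.

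\textbf{($\Leftarrow$)} Suppose $f(z_0)=0$ for some $z_0\in\mathbb{T}$. Write $g(z)=\frac{z-z_0}{2}$. By Theorem~\ref{anurag10}, $g$ is a TDZ in $\mathcal{A}(\mathbb{D})$ since $|z_0|=1$. Now I would like to conclude that $f$ is a TDZ because $f$ "contains the factor" $g$. The cleanest route is Lemma~\ref{anurag11}: if I can write $f=g\cdot h$ for some $h\in\mathcal{A}(\mathbb{D})$, then since $\mathcal{A}(\mathbb{D})$ is commutative and $g$ is a TDZ, the product $f=gh$ is a TDZ. The factorization $f(z)=(z-z_0)\,\tilde h(z)$ with $\tilde h$ analytic on $\mathbb{D}$ is immediate from $f(z_0)=0$ (divide the Taylor series, or use that $z_0$ is a zero of the analytic function $f$); the only point needing a short argument is that $\tilde h$ extends continuously to $\overline{\mathbb{D}}$, i.e. $\tilde h\in\mathcal{A}(\mathbb{D})$. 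This follows because $\tilde h(z)=f(z)/(z-z_0)$ is continuous on $\overline{\mathbb{D}}\setminus\{z_0\}$, and near $z_0$ one can use analyticity of $f$ in $\mathbb{D}$ together with continuity on $\overline{\mathbb{D}}$ to see the quotient stays bounded and in fact extends continuously. (Alternatively, bypass $\tilde h$ entirely: directly exhibit a unit-norm sequence $f_n$ with $f f_n\to 0$ by adapting the sequence used to show $g$ is a TDZ.) Then Lemma~\ref{anurag11} finishes this direction.

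\textbf{($\Rightarrow$)} I prove the contrapositive: if $f(z)\neq 0$ for every $z\in\mathbb{T}$, then $f$ is not a TDZ. If in addition $f$ has no zeros in $\mathbb{D}$, then $f$ is nonvanishing on all of $\overline{\mathbb{D}}$, hence $1/f\in\mathcal{A}(\mathbb{D})$, so $f$ is invertible and therefore not a TDZ (an invertible element $x$ satisfies $\|x x_n\|\geq \|x^{-1}\|^{-1}\|x_n\|$, which cannot tend to $0$ when $\|x_n\|=1$). If $f$ does have zeros in $\mathbb{D}$, there are finitely many of them, say $a_1,\dots,a_k$ with multiplicities (finiteness because zeros of a nonzero analytic function are isolated and $f$ is continuous and nonvanishing on the compact set $\mathbb{T}$, so the zero set is a compact subset of the open disk, hence finite). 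Factor $f(z)=B(z)\,u(z)$ where $B(z)=\prod_{j=1}^{k}(z-a_j)$ is a polynomial and $u\in\mathcal{A}(\mathbb{D})$ is nonvanishing on $\overline{\mathbb{D}}$, hence invertible. Since the TDZ property is unaffected by multiplication by a unit (if $f x_n\to 0$ then $B x_n = u^{-1} f x_n\to 0$ and conversely), $f$ is a TDZ iff $B$ is. So it suffices to show a polynomial all of whose roots lie in $\mathbb{D}$ is not a TDZ.

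\textbf{Main obstacle.} The crux is thus: \emph{a polynomial $B(z)=c\prod_{j=1}^k(z-a_j)$ with all $|a_j|<1$ is not a TDZ in $\mathcal{A}(\mathbb{D})$}. The natural approach is to show there is a constant $\delta>0$ with $\|B g\|_\infty \geq \delta\|g\|_\infty$ for all $g\in\mathcal{A}(\mathbb{D})$ — a lower bound that rules out the TDZ property directly. Equivalently, multiplication by $B$ is bounded below on $\mathcal{A}(\mathbb{D})$. I expect this to reduce to the single-factor case $(z-a)$ with $|a|<1$ by induction on $k$ (a product of operators each bounded below is bounded below), so the heart of the matter is: multiplication by $(z-a)$, $|a|<1$, is bounded below on $\mathcal{A}(\mathbb{D})$. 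One way to see this: for $g\in\mathcal{A}(\mathbb{D})$ with $\|g\|_\infty=1$, pick $w\in\mathbb{T}$ with $|g(w)|$ close to $1$; then $|(w-a)g(w)|\geq (1-|a|)|g(w)|$, giving $\|(z-a)g\|_\infty\geq(1-|a|)\|g\|_\infty$ directly — in fact the single-factor bound is completely elementary since $|z-a|\geq 1-|a|$ on $\mathbb{T}$ and the sup-norm is attained on $\mathbb{T}$ by the maximum principle. Chaining the $k$ factors gives $\|Bg\|_\infty\geq |c|\prod_j(1-|a_j|)\,\|g\|_\infty$, which is the desired lower bound. This completes the contrapositive, and hence the theorem. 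The only place demanding care is confirming that every function in $\mathcal{A}(\mathbb{D})$ attains its sup-norm on $\mathbb{T}$ (maximum modulus principle) and that the auxiliary quotients/products stay within $\mathcal{A}(\mathbb{D})$; everything else is bookkeeping.
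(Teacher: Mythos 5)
Your forward direction (the contrapositive) is correct, though more roundabout than necessary: once you know $\delta:=\min_{z\in\mathbb{T}}|f(z)|>0$, the same device you use for the single factor $(z-a)$ --- evaluate $fg$ at a point $w\in\mathbb{T}$ where $|g|$ attains its sup (maximum modulus) --- gives $\|fg\|_\infty\geq |f(w)||g(w)|\geq\delta\|g\|_\infty$ directly, with no need to factor out the interior zeros at all. This is exactly what the paper does.

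The ($\Leftarrow$) direction, however, has a genuine gap. Your key step is the factorization $f(z)=(z-z_0)\tilde h(z)$ with $\tilde h\in\mathcal{A}(\mathbb{D})$, and the claim that ``near $z_0$ one can use analyticity of $f$ in $\mathbb{D}$ together with continuity on $\overline{\mathbb{D}}$ to see the quotient stays bounded and in fact extends continuously.'' This is false in general: a boundary zero of a disk-algebra function need not be a zero of integer order. Take $f(z)=(1-z)^{1/2}$ (principal branch; $1-z$ has nonnegative real part on $\overline{\mathbb{D}}$, so this is continuous on $\overline{\mathbb{D}}$ and analytic in $\mathbb{D}$, hence in $\mathcal{A}(\mathbb{D})$, with $f(1)=0$). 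Then $f(z)/(z-1)=-(1-z)^{-1/2}$ is unbounded as $z\to1$, so no such $\tilde h$ exists, and Lemma~\ref{anurag11} cannot be applied to $f$ itself. The paper gets around precisely this obstruction with Mergelyan's theorem: approximate $f$ uniformly by polynomials, subtract their values at $z_0$ to get polynomials $q_n$ with $q_n(z_0)=0$ and $q_n\to f$ uniformly; for \emph{polynomials} the factorization $q_n=(z-z_0)r_n$ is exact with $r_n$ a polynomial, so each $q_n$ is a TDZ by Theorem~\ref{anurag10} and Lemma~\ref{anurag11}, and then Theorem~\ref{anurag4} (the set of TDZ is closed) transfers the property to the limit $f$. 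Your parenthetical fallback --- directly exhibiting a unit-norm sequence $f_n$ with $ff_n\to0$, e.g.\ $f_n(z)=\bigl(\tfrac{1+\overline{z_0}z}{2}\bigr)^n$, which peaks at $z_0$ and decays uniformly away from it while $f$ is small near $z_0$ --- would also work, but as written it is only a suggestion, not an argument, so the direction as presented is incomplete.
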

\begin{proof}
	Let	$f \in \mathcal{A}(\mathbb{D})$ be a TDZ. Then there exists a sequence $\{f_n\}_{n=1}^\infty$ in $\mathcal{A}(\mathbb{D})$ such that $\Vert f_n\Vert=1 \ \forall \ n\in \mathbb{N}$, and $\Vert ff_n\Vert \to 0.$
	Suppose that $f(z)\neq 0$ for all $z\in \mathbb{T}.$ Then $\vert f\vert>0$ on $\mathbb{T}.$ Now the continuity of $\vert f\vert $ on the compact set $\mathbb{T}$   implies that $\vert f\vert$ attains its maximum and minimum values on $\mathbb{T}.$ Therefore, there exists a $\delta >0$ such that $$\vert f(z)\vert >\delta ~\forall z \in \mathbb{T}.$$ Further, for each $n\in \mathbb{N}$ $$\Vert ff_n\Vert=\max_{z\in \mathbb{T}}\vert f(z)f_n(z)\vert>\delta \cdot \max_{z\in \mathbb{T}}\vert f_n(z)\vert=\delta \cdot \Vert f_n\Vert=\delta>0.$$ This contradicts the fact $\Vert ff_n\Vert \to 0.$ Hence there exists a point $z_0 \in \mathbb{T}$ such that $f(z_0)=0.$\\
	Conversely, assume that there exists $z_0 \in \mathbb{T}$ such that $f(z_0)=0.$ Since $f$ is continuous on $\mathbb{\overline{D}}$ and analytic on $\mathbb{D},$ by Mergelyan's theorem\cite{Walter}, there exists a sequence of polynomials $\{p_n\}_{n=1}^{\infty}$ that uniformly converges to $f.$ Let $q_n(z)=p_n(z)-p_n(z_0).$ Then $\{q_n\}_{n=1}^{\infty}$ uniformly converges to $f$ and $q_n(z_0)=0.$ Hence $q_n(z)=(z-z_0)r_n(z)~\forall~n\geq 1.$ Now, since $z_0\in \mathbb{T},$ it follows from the Theorem $\ref{anurag10}$ that $h(z)=(z-z_0)$ is a TDZ. Consequently, by Lemma \ref{anurag11}, $q_n(z)$ is also a TDZ for each $n\geq1$. Since $\{q_n\}_{n=1}^{\infty}$ converges to $f,$ hence by Theorem \ref{anurag4}, $f$ is also a TDZ. 
\end{proof}
\begin{remark}
	We realise that an alternative proof of the above Theorem \ref{TDZ in A(D)} can be obtained using the notion of Shilov boundary and applying the result [\cite{E Kaniuth}, Corollary 3.4.6, page 171].
\end{remark}


\subsection{TDZ in $L^{\infty}(\mu)$} \label{sec3.2}\

The following theorem characterizes the zero-divisors in $L^{\infty}(\mu).$
\begin{theorem}
	Let $f\in L^\infty(\mu)$. Then $f$ is a zero-divisor if and only if $\mu(E^c)>0,$ where $E=\{x\in X : f(x)\neq0\}.$
\end{theorem}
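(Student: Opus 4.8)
The plan is to exploit the fact that $L^\infty(\mu)$ is a commutative Banach algebra with identity, so an element $f$ is a zero-divisor precisely when there is a nonzero $g\in L^\infty(\mu)$ with $fg=0$ ($\mu$-a.e.). The natural candidate for such a $g$ is an indicator function supported on the set where $f$ vanishes.

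First I would prove the ``if'' direction. Suppose $\mu(E^c)>0$, where $E=\{x\in X: f(x)\neq 0\}$, so $E^c=\{x\in X: f(x)=0\}$ has positive measure. Since $(X,\Omega,\mu)$ is $\sigma$-finite, I can choose a measurable subset $F\subseteq E^c$ with $0<\mu(F)<\infty$; then $g=\mychi_F$ is a well-defined element of $L^\infty(\mu)$ with $\|g\|_\infty=1\neq 0$. For a.e.\ $x$, either $x\in F\subseteq E^c$, in which case $f(x)=0$, or $x\notin F$, in which case $g(x)=0$; either way $f(x)g(x)=0$. Hence $fg=0$ in $L^\infty(\mu)$ with $g\neq 0$, so $f$ is a zero-divisor.

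Next I would prove the ``only if'' direction by contraposition: assume $\mu(E^c)=0$, i.e.\ $f\neq 0$ $\mu$-a.e., and show $f$ is not a zero-divisor. Suppose $g\in L^\infty(\mu)$ satisfies $fg=0$ a.e. On the set $E$ (which is co-null) we have $f(x)\neq 0$, so $f(x)g(x)=0$ forces $g(x)=0$ for a.e.\ $x\in E$; since $\mu(E^c)=0$, this gives $g=0$ a.e., i.e.\ $g=0$ in $L^\infty(\mu)$. Thus no nonzero $g$ annihilates $f$, so $f$ is not a zero-divisor. Combining the two directions yields the equivalence.

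I do not expect a serious obstacle here; the only mild point of care is the $\sigma$-finiteness hypothesis, which is needed to pass from ``$\mu(E^c)>0$'' to ``there is a subset of $E^c$ of positive \emph{finite} measure'' so that its indicator lies in $L^\infty(\mu)$ with finite (indeed unit) norm — though in fact $\mychi_{E^c}$ itself already has $\|\mychi_{E^c}\|_\infty=1$ regardless of whether $\mu(E^c)$ is finite, so even this is not truly necessary. Everything else is a routine a.e.\ argument.
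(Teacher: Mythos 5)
Your proposal is correct and follows essentially the same route as the paper: the forward direction uses the indicator of the zero set of $f$ as the annihilating element (the paper takes $g=\mychi_{E^c}$ directly, as you note is possible), and your contrapositive argument for the converse is just the paper's direct argument ($fg=0$ forces $g$ to vanish a.e.\ on $E$) restated. No gaps.
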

\begin{proof}
	Suppose $E=\{x\in X : f(x)\neq0\}$ and $\mu(E^c)>0.$ Let $g=\mychi_{E^c}.$ Then, $g\neq0$ and $f\cdot g=0.$ This implies that $f$ is a zero- divisor.\\
	Conversely, suppose that $f$ is a zero-divisor. Then there exists a $0\neq g\in L^\infty(\mu)$ such that $f\cdot g=0.$ Let $E=\{x\in X :f(x)\neq0\}.$ Then $E$ is a measurable set. Also, $f\cdot g=0$ implies $E\subset\{x\in X:g(x)=0\}$. This implies that $\{x\in X:g(x)\neq 0\}\subset E^c.$ Since $g\neq 0,$ then $\mu(\{x\in X:g(x)\neq 0\})>0.$ Hence $\mu(E^c)>0.$ 
\end{proof}
The next theorem determines the topological divisors of zero in $\L^\infty(\mu).$
\begin{theorem}\label{anurag8}
	Let $f\in L^\infty(\mu).$ Then $f$ is a TDZ if and only if there exists a sequence $\{E_n\}_{n=1}^{\infty}$ of measurable sets in $X$ with $\mu(E_n)>0$ for each $n\geq 1$ and $\Vert f\vert_{E_n} \Vert \to 0.$
\end{theorem}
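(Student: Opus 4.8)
The statement is a biconditional, so the plan is to prove the two implications separately. In both directions the bridge between ``$f$ is a TDZ'' and ``$f$ is small on sets of positive measure'' is provided by indicator functions: in $L^\infty(\mu)$ one has $\|\chi_E\|_\infty = 1$ precisely when $\mu(E)>0$, which is exactly the normalization required in the definition of a TDZ.

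\emph{Sufficiency.} Given a sequence $\{E_n\}_{n=1}^\infty$ of measurable sets with $\mu(E_n)>0$ and $\|f|_{E_n}\|\to 0$, I would take $f_n=\chi_{E_n}$. Then $\|f_n\|_\infty=1$ for every $n$, and a direct computation identifies $\|ff_n\|_\infty=\|f\chi_{E_n}\|_\infty$ with $\|f|_{E_n}\|$, since both equal $\inf\{K\ge 0:\mu(\{x\in E_n:|f(x)|>K\})=0\}$. Hence $\|ff_n\|_\infty\to 0$ while $\|f_n\|_\infty=1$, so $f$ is a TDZ. This direction is essentially one line once the norm identification is noted.

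\emph{Necessity.} Suppose $f$ is a TDZ. Since $L^\infty(\mu)$ is commutative, there is a sequence $\{f_n\}_{n=1}^\infty$ in $L^\infty(\mu)$ with $\|f_n\|_\infty=1$ for all $n$ and $\|ff_n\|_\infty\to 0$. The idea is that $f$ must be uniformly small, in the essential-supremum sense, on any set where $f_n$ stays bounded away from $0$. Accordingly, set
\[
E_n=\Big\{x\in X:|f_n(x)|>\tfrac12\Big\}.
\]
Because $\mathrm{ess.sup}\,|f_n|=\|f_n\|_\infty=1>\tfrac12$, the definition of the essential supremum forces $\mu(E_n)>0$. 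On $E_n$ we have, for $\mu$-almost every $x$,
\[
|f(x)|=\frac{|f(x)f_n(x)|}{|f_n(x)|}\le 2\,|f(x)f_n(x)|\le 2\,\|ff_n\|_\infty ,
\]
the last inequality holding a.e.\ by the definition of $\|\cdot\|_\infty$. Therefore $\|f|_{E_n}\|\le 2\|ff_n\|_\infty\to 0$, and $\{E_n\}_{n=1}^\infty$ is the required sequence.

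The only points that need care are the identification of $\|f\chi_{E_n}\|_\infty$ with $\|f|_{E_n}\|$ and the ``$\mu$-a.e.\ on $E_n$'' bookkeeping when converting the $L^\infty$-bound on $ff_n$ into a pointwise estimate. Neither of these is a genuine obstacle, so I do not anticipate a hard step here; the choice of the threshold $\tfrac12$ is arbitrary and any fixed constant in $(0,1)$ would work equally well.
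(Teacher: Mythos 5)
Your proposal is correct and follows essentially the same route as the paper: indicator functions of the $E_n$ for sufficiency, and for necessity the level sets $\{|f_n|>c\}$ of the normalized sequence, on which $|f|$ is controlled by $c^{-1}\|ff_n\|_\infty$. The only cosmetic difference is that the paper uses the varying threshold $1-\tfrac1n$ where you use the fixed constant $\tfrac12$, which, as you note, works equally well.
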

\begin{proof}
	Suppose there exists a sequence $\{E_n\}_{n=1}^{\infty}$ of measurable sets with $\mu (E_n)>0$ for each $n\geq 1$ and $\Vert f\vert_{E_n}\Vert \to 0$ as $n \to \infty.$
	Let $g_n=\mychi _{E_n}.$ Then $\Vert g_n\Vert=1$ for each $n \geq 1$ and $fg_n=f\vert_{E_n}.$ Hence $\Vert fg_n\Vert=\Vert f\vert_{E_n}\Vert \to 0$ as $n \to \infty.$ This implies that $f$ is a TDZ.\\
	Conversely, suppose $f\in L^\infty(\mu)$ is a TDZ. Then there exists a sequence $\{g_n\}_{n=1}^{\infty}$ in $L^\infty(\mu)$ with $\Vert g_n\Vert=1$ for each $n\geq1$ and $\Vert fg_n\Vert \to 0.$\\ Let $E_n=\{x\in X:\vert g_n(x)\vert>1-\frac{1}{n}\}.$ Since $\|g_n\|=1,$ hence  $\mu(E_n)>0.$ 
	Clearly, $\Vert fg_n\Vert \to 0$ implies $\Vert fg_n\vert_{E_n}\Vert \to 0$ as $n \to \infty.$\\
	Further, for $x\in E_n$ $$\vert fg_n(x)\vert=\vert f(x)\vert \vert g_n(x)\vert> \vert f(x)\vert(1-\frac{1}{n}).$$ Consequently $$\vert f(x)\vert< \frac{1}{(1-\frac{1}{n})}\vert fg_n(x) \vert \ \forall\ x\in E_n.$$  Hence $$\Vert f\vert_{E_n}\Vert\leq \frac{1}{(1-\frac{1}{n})} \Vert fg_n\vert_{E_n}\Vert \to 0.$$ Thus $$\Vert f\vert_{E_n}\Vert \to 0.$$ 
\end{proof}	
The following Remark can be easily obtained.
\begin{remark}
	An element $f\in L^\infty(\mu)$ is regular if and only if there exists a $\lambda_0>0$ such that $\vert f\vert\geq \lambda_0$ a.e. on $X.$
\end{remark}

\begin{corollary}\label{anurag9}
	
	In $L^\infty(\mu),$ The collection of all TDZ is equivalent to the collection of all singular elements. 
\end{corollary}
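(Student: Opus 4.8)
The plan is to prove the two inclusions separately, using the Remark characterizing regular elements of $L^\infty(\mu)$ together with Theorem \ref{anurag8}, and the elementary fact that in a unital Banach algebra no invertible element can be a TDZ.

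First I would dispose of the inclusion $\{\text{TDZ}\}\subseteq\{\text{singular}\}$, which holds in \emph{any} Banach algebra with identity and is not special to $L^\infty(\mu)$: if $f$ were regular with inverse $f^{-1}$ and $\{g_n\}$ were a sequence with $\|g_n\|=1$ and $\|fg_n\|\to 0$, then $\|g_n\|=\|f^{-1}fg_n\|\le\|f^{-1}\|\,\|fg_n\|\to 0$, contradicting $\|g_n\|=1$; the same argument works with $g_nf$. Hence every TDZ is singular.

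Next, for the reverse inclusion $\{\text{singular}\}\subseteq\{\text{TDZ}\}$, I would take $f\in L^\infty(\mu)$ singular. By the Remark (regularity is equivalent to $|f|\ge\lambda_0$ a.e.\ for some $\lambda_0>0$), being singular means that for every $n\in\mathbb{N}$ the set $E_n=\{x\in X:|f(x)|<\tfrac1n\}$ has $\mu(E_n)>0$; otherwise $|f|\ge\tfrac1n$ a.e.\ for some $n$ and $f$ would be regular. Then $\|f|_{E_n}\|\le\tfrac1n\to 0$, so $\{E_n\}$ is exactly the kind of sequence required by Theorem \ref{anurag8}, and therefore $f$ is a TDZ.

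Combining the two inclusions gives that the set of TDZ coincides with the set of singular elements, proving the corollary. I do not anticipate a serious obstacle here: the only mild subtlety is making sure the Remark is invoked in the correct direction (``not regular'' $\Rightarrow$ ``$\operatorname{ess\,inf}|f|=0$'' $\Rightarrow$ ``$\mu(E_n)>0$ for all $n$''), after which the conclusion is immediate from Theorem \ref{anurag8}.
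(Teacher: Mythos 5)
Your proposal is correct and follows essentially the same route as the paper: the forward inclusion is the standard ``invertible elements are never TDZ'' argument (which the paper dismisses with ``clearly''), and the reverse inclusion extracts the sets $E_n=\{x:|f(x)|<\tfrac1n\}$ of positive measure from non-regularity and feeds them into Theorem \ref{anurag8}, exactly as the paper does. Your version is slightly more explicit in invoking the Remark characterizing regular elements, but the substance is identical.
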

\begin{proof}
	Clearly, a TDZ is a singular element.\\
	Let $f\in L^\infty(\mu)$ be a singular element. Then $f$ is not regular in $L^\infty(\mu).$ Therefore, for each $n\in \mathbb{N},$ there exists a measurable set $E_n$ with $0<\mu(E_n)<\infty$ such that $\vert f\vert_{E_n}\vert<\frac{1}{n}.$ Hence $\Vert f\vert_{E_n}\Vert\leq\frac{1}{n} \ \forall\ n\geq 1.$ Therefore $\Vert f\vert_{E_n}\Vert \to 0$ as $n \to \infty.$\\
	Consequently, by Theorem $\ref{anurag8},$ $f$ is a TDZ.
\end{proof} 

\begin{remark}
	We note that the Corollary \ref{anurag9} also follows from the result  that any non-invertible element in a symmetric normed algebra without radicals is a TDZ. \rm(See {\cite{GelfSilov}}, Corollary $2$, sec.$10.2$).
\end{remark}

\begin{theorem}
	Let	$h \in L^\infty(\mu)$ and $M_h$ be the multiplication operator on $L^\infty(\mu)$ induced by $h.$ Then $h$ is a TDZ if and only if $0\in \textrm{ess.range}(h)=\sigma (M_h).$ 
\end{theorem}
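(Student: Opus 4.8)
The plan is to reduce this statement to Theorem~\ref{anurag8}, which characterizes a TDZ $h \in L^\infty(\mu)$ as one for which there is a sequence $\{E_n\}$ of positive-measure sets with $\|h|_{E_n}\| \to 0$. The key observation is that the condition "$0 \in \textrm{ess.range}(h)$" unwinds, by Lemma~\ref{anurag6}(i), to: for every $\epsilon > 0$, $\mu(\{x \in X : |h(x)| < \epsilon\}) > 0$. So I would first spell out the chain of equivalences $0 \in \textrm{ess.range}(h) \iff 0 \in \sigma(M_h)$ (this is exactly Lemma~\ref{anurag6}(i) applied with $\lambda = 0$) $\iff$ for each $n \geq 1$, $\mu(\{x : |h(x)| < \tfrac{1}{n}\}) > 0$.

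For the forward direction, suppose $h$ is a TDZ. By Theorem~\ref{anurag8} there is a sequence $\{E_n\}$ with $\mu(E_n) > 0$ and $\|h|_{E_n}\| \to 0$. Fix any $\epsilon > 0$; choose $n$ large enough that $\|h|_{E_n}\| < \epsilon$. Then $|h(x)| < \epsilon$ for a.e.\ $x \in E_n$, and since $\mu(E_n) > 0$ we get $\mu(\{x : |h(x)| < \epsilon\}) > 0$. As $\epsilon$ was arbitrary, $0 \in \textrm{ess.range}(h)$.

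For the converse, suppose $0 \in \textrm{ess.range}(h)$. For each $n \geq 1$ set $F_n = \{x \in X : |h(x)| < \tfrac{1}{n}\}$; then $\mu(F_n) > 0$ by the unwound condition. Here I would use $\sigma$-finiteness of $\mu$ to replace $F_n$ by a subset $E_n \subseteq F_n$ with $0 < \mu(E_n) < \infty$ (needed so that $\chi_{E_n}$ actually lies in $L^\infty(\mu)$ with norm $1$ — strictly speaking $\chi_{F_n}$ already lies in $L^\infty$, but passing to a finite-measure subset is harmless and keeps things clean). On $E_n$ we have $\|h|_{E_n}\| \leq \tfrac{1}{n} \to 0$, so by Theorem~\ref{anurag8}, $h$ is a TDZ. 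I expect the only mild subtlety to be this measurability/$\sigma$-finiteness bookkeeping in producing the sets $E_n$; the analytic content is entirely carried by Theorem~\ref{anurag8} and Lemma~\ref{anurag6}, so there is no real obstacle. One could also note that this theorem is essentially a special case of Corollary~\ref{anurag9} together with the description of regular elements in the preceding Remark, since $0 \notin \textrm{ess.range}(h)$ says precisely that $|h| \geq \lambda_0$ a.e.\ for some $\lambda_0 > 0$, i.e.\ that $h$ is regular in $L^\infty(\mu)$.
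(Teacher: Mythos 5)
Your proposal is correct and follows essentially the same route as the paper: both directions reduce to the characterization of TDZ in $L^\infty(\mu)$ via sets $E_n=\{x:|h(x)|<\tfrac{1}{n}\}$ of positive measure (Theorem~\ref{anurag8}), the only cosmetic difference being that the paper handles the ``TDZ $\Rightarrow 0\in\textrm{ess.range}(h)$'' direction through Corollary~\ref{anurag9} and regularity, a variant you yourself note at the end. The $\sigma$-finiteness bookkeeping you mention is harmless but not needed, since $\chi_{F_n}$ already has norm $1$ in $L^\infty(\mu)$ once $\mu(F_n)>0$.
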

\begin{proof}
	Let $0\in ess.range(h)=\sigma (M_h)=\{\lambda \in \mathbb{C}:\forall \epsilon>0,\mu (\{x\in X: \vert h(x)-\lambda\vert<\epsilon\})>0 \}.$ Then for each $n\geq 1,$ $\mu (\{x\in X:\vert h(x)\vert<\frac{1}{n}\})>0.$ Let $E_n=\{x\in X:\vert h(x)\vert<\frac{1}{n}\}.$ Thus for each $n\geq 1,~  \mu(E_n)>0$  and $\vert h\vert_{E_n}\vert<\frac{1}{n}.$ This implies that $\Vert h\vert_{E_n}\Vert \leq \frac{1}{n} \ \forall \ n\in\mathbb{N}.$ Therefore $\Vert h\vert_{E_n} \Vert \to 0$ as $n \to \infty.$ Hence $h$ is a TDZ.\\
	Conversely, suppose $h$ is a TDZ. Then, by Corollary $\ref{anurag9},$ $h$ is not a regular element. Therefore, for each $\epsilon>0,~\mu (\{x\in X : \vert h(x)-0\vert<\epsilon\})>0.$ This implies that  $0\in$ ess.range$(h)=\sigma (M_h).$ 
\end{proof}
\begin{remark}
	Let $h \in L^\infty(\mu)$ be a TDZ. Then
	\begin{enumerate}
		\item $0\in \sigma_{p}(M_h)$ if and only if $\mu(\{x\in X:h(x)=0\})>0.$
		\item $0\in \sigma_{c}(M_h)$ if and only if $\mu(\{x\in X:h(x)=0\})=0$ and $\mu(\{x\in X:\vert h(x)\vert< \epsilon\})>0$ for each $\epsilon>0.$ 
	\end{enumerate}
\end{remark}

\begin{theorem}
	Let $h\in L^\infty(\mu)$ and $M_h:L^\infty(\mu)\to L^\infty(\mu)$ be the multiplication operator. Then $h$ is a zero-divisor if and only if $0\in \sigma_{p} (M_h).$ 
\end{theorem}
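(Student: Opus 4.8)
The plan is to prove the equivalence by unwinding the definition of zero-divisor in the Banach algebra $L^\infty(\mu)$ and comparing it with the description of the point spectrum $\sigma_p(M_h)$ furnished by Lemma \ref{anurag6}(ii), namely $\sigma_p(M_h)=\{\lambda\in\mathbb{C}:\mu(\{x\in X:h(x)=\lambda\})>0\}$. In particular, $0\in\sigma_p(M_h)$ exactly when $\mu(\{x\in X:h(x)=0\})>0$, i.e. when $h$ vanishes on a set of positive measure. So the theorem reduces to: $h$ is a zero-divisor in $L^\infty(\mu)$ if and only if the zero set $Z=\{x\in X:h(x)=0\}$ has positive measure. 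This is essentially the content of the earlier theorem characterizing zero-divisors (with $E^c=Z$ up to a null set, since $E=\{x:h(x)\neq 0\}$), so one route is simply to cite that theorem and Lemma \ref{anurag6}(ii) and be done.

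For a self-contained argument I would proceed in two directions. First, suppose $\mu(Z)>0$. Since $(X,\Omega,\mu)$ is $\sigma$-finite I can choose a measurable subset $Z_0\subseteq Z$ with $0<\mu(Z_0)<\infty$, and set $g=\mychi_{Z_0}$. Then $g\in L^\infty(\mu)$, $\|g\|_\infty=1\neq 0$, and $h\cdot g=0$ pointwise a.e.\ because $g$ is supported in $Z$ where $h$ vanishes. Hence $h$ is a zero-divisor, and moreover $\mu(Z)>0$ is exactly the statement $0\in\sigma_p(M_h)$ by Lemma \ref{anurag6}(ii).

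Conversely, suppose $h$ is a zero-divisor, so there is $g\in L^\infty(\mu)$ with $g\neq 0$ (i.e.\ $\mu(\{x:g(x)\neq 0\})>0$) and $hg=0$ a.e. On the set $\{x:g(x)\neq 0\}$ we must have $h(x)=0$ a.e., so $\{x:g(x)\neq 0\}\subseteq Z$ up to a null set, whence $\mu(Z)\geq\mu(\{x:g(x)\neq 0\})>0$. Again by Lemma \ref{anurag6}(ii) this says precisely $0\in\sigma_p(M_h)$, completing the proof.

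I do not anticipate a genuine obstacle here; the only point requiring a little care is the use of $\sigma$-finiteness to pass from a positive-measure set to one of finite positive measure, so that the characteristic function lies in $L^\infty(\mu)$ and is a bona fide nonzero element — but this is routine. The essential step is just the observation that in $L^\infty(\mu)$, multiplication is pointwise, so a zero-divisor relation $hg=0$ is equivalent to the supports being almost disjoint, which immediately links to the zero set of $h$ and hence to $\sigma_p(M_h)$ via Lemma \ref{anurag6}(ii).
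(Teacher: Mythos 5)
Your proof is correct, but it takes a different route from the paper's. The paper's argument is a two-line unwinding of definitions: $0\in\sigma_p(M_h)$ means by definition that there is a nonzero $f\in L^\infty(\mu)$ with $M_h(f)=0\cdot f=0$, and since $M_h(f)=h\cdot f$, this is verbatim the statement that $h$ is a zero-divisor; no measure theory beyond the definition of the algebra is needed. You instead route the equivalence through Lemma \ref{anurag6}(ii), reducing both sides to the condition $\mu(\{x:h(x)=0\})>0$ --- in effect re-proving the paper's earlier characterization of zero-divisors in $L^\infty(\mu)$ and gluing it to the measure-theoretic description of the point spectrum. Your version buys a concrete criterion (positivity of the measure of the zero set) and makes the link to the earlier theorem explicit, at the cost of invoking a lemma that the direct argument does not need. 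One small remark: the $\sigma$-finiteness step you flag is actually superfluous here, since $\mychi_Z$ lies in $L^\infty(\mu)$ and is nonzero whenever $\mu(Z)>0$ regardless of whether $\mu(Z)$ is finite; that precaution is only needed when the witness must live in $L^p(\mu)$ for $p<\infty$.
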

\begin{proof}
	Let $0\in \sigma_{p} (M_h).$ Then $\exists~ 0\neq f \in L^\infty(\mu)$ such that $M_h(f)=0\cdot f=0.$ This implies that $h \cdot f=0.$ Hence $h$ is a zero-divisor in $L^\infty(\mu)$ .\\
	Conversely, suppose $h \in L^\infty(\mu)$ is a zero-divisor. Then $\exists~0\neq f \in L^\infty(\mu)$ such that $h \cdot f=0=0\cdot f$ a.e.. This implies that $M_h(f)=0\cdot f.$ Hence $0\in \sigma_{p} (M_h).$   
\end{proof}

\subsection{Multiplication operator as TDZ} \label{sec3.3}\

In this section, we state and prove the necessary and sufficient condition for the multiplication operator $M_h$  to be TDZ in $\mathcal{B}(L^p(\mu)).$

\begin{theorem}\label{Anurag6}
	Let $h \in L^\infty(\mu)$ and $1\leq p\leq\infty.$ Define the multiplication operator $M_h:L^p(\mu) \to L^p(\mu)$ by $M_h(f)=h \cdot f=h f$. Then  $M_h$ is a TDZ in $\mathcal{B}(L^p(\mu))$ if and only if $h$ is a TDZ in $L^\infty(\mu).$ 
\end{theorem}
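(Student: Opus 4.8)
The plan is to prove both implications directly, relating topological-divisor behavior of $M_h$ in $\mathcal{B}(L^p(\mu))$ to that of $h$ in $L^\infty(\mu)$, and to exploit the previously established characterization (Theorem~\ref{anurag8}) of TDZ in $L^\infty(\mu)$ via a sequence $\{E_n\}$ of positive-measure sets with $\|h|_{E_n}\|\to 0$.

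\medskip

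\noindent\textbf{Sufficiency ($h$ TDZ in $L^\infty(\mu)$ $\Rightarrow$ $M_h$ TDZ in $\mathcal{B}(L^p(\mu))$).} Suppose $h$ is a TDZ in $L^\infty(\mu)$. By Theorem~\ref{anurag8} there is a sequence $\{E_n\}$ of measurable sets with $\mu(E_n)>0$ and $\|h|_{E_n}\|\to 0$. For $1\le p<\infty$, since $\mu$ is $\sigma$-finite, I may shrink each $E_n$ (if necessary) so that $0<\mu(E_n)<\infty$; then define $T_n\in\mathcal{B}(L^p(\mu))$ by $T_n f = \mu(E_n)^{-1/p}\,\chi_{E_n}\cdot \big(\tfrac1{\mu(E_n)}\int_{E_n} f\,d\mu\big)$ — or even more simply, the rank-one operator $T_n f = \langle f, \chi_{E_n}/\mu(E_n)\rangle\, \chi_{E_n}\,\mu(E_n)^{1/p}$ normalized so that $\|T_n\|=1$. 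The cleanest choice: let $u_n = \chi_{E_n}/\mu(E_n)^{1/p}$, so $\|u_n\|_p=1$, and take $T_n$ to be the operator $f\mapsto \psi(f)\,u_n$ for a fixed norm-one functional $\psi$; then $\|T_n\|=1$ and $M_h T_n f = \psi(f)\,h u_n$, with $\|h u_n\|_p = \big(\int_{E_n}|h|^p\,d\mu\big)^{1/p}/\mu(E_n)^{1/p}\le \|h|_{E_n}\|_\infty\to 0$, so $\|M_h T_n\|\to 0$. For $p=\infty$ the same construction works with $u_n=\chi_{E_n}$. Hence $M_h$ is a TDZ.

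\medskip

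\noindent\textbf{Necessity ($M_h$ TDZ in $\mathcal{B}(L^p(\mu))$ $\Rightarrow$ $h$ TDZ in $L^\infty(\mu)$).} Suppose $M_h$ is a TDZ, so there exist $T_n\in\mathcal{B}(L^p(\mu))$ with $\|T_n\|=1$ and $\|M_h T_n\|\to 0$ (the one-sided case $\|T_n M_h\|\to 0$ is handled symmetrically, using that $M_h$ commutes with multiplication operators / acting on the other side). Fix $\varepsilon>0$ with $\|M_h T_n\|<\varepsilon$ for large $n$, and pick $g_n\in L^p(\mu)$, $\|g_n\|_p=1$, with $\|T_n g_n\|_p> 1/2$; write $f_n = T_n g_n$. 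Then $\|h f_n\|_p=\|M_h(f_n)\|_p<\varepsilon\,\|g_n\|_p=\varepsilon$ while $\|f_n\|_p>1/2$. The remaining task is to extract from the normalized $f_n/\|f_n\|_p$ a sequence of positive-measure sets $E_n$ on which $|h|$ is uniformly small: set $E_n = \{x: |h(x)| < \sqrt{\varepsilon}\}\cap\{x: |f_n(x)|>0\}$ — more precisely, use a Chebyshev/Markov argument, since $\int_{\{|h|\ge \delta\}}|f_n|^p\,d\mu \le \delta^{-p}\int |h f_n|^p\,d\mu < \delta^{-p}\varepsilon^p$, so for $\delta$ fixed and $\varepsilon$ small the mass of $f_n$ concentrates on $\{|h|<\delta\}$, forcing $\mu(\{|h|<\delta\})>0$ for every $\delta>0$; this says $0\in\mathrm{ess.range}(h)$, which by the earlier results is exactly equivalent to $h$ being a TDZ in $L^\infty(\mu)$. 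For $p=\infty$ one argues instead directly from the $L^\infty$ estimates, or simply invokes the characterization $0\in\mathrm{ess.range}(h)\iff h$ singular $\iff h$ TDZ in $L^\infty(\mu)$ proved above.

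\medskip

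\noindent\textbf{Main obstacle.} The genuinely delicate point is the necessity direction: passing from ``$\|h f_n\|_p$ small while $\|f_n\|_p$ bounded below'' to a \emph{single} conclusion that $0\in\mathrm{ess.range}(h)$, uniformly in a way that is insensitive to where the mass of $f_n$ sits. The Chebyshev inequality argument above does this cleanly for $1\le p<\infty$; the case $p=\infty$ requires a slightly different touch (essential-supremum estimates rather than integral estimates), but both funnel into the already-proven equivalence between TDZ in $L^\infty(\mu)$, singularity, and $0\in\mathrm{ess.range}(h)$, so once that reduction is in place the proof closes quickly. I would also take care to state the two-sided version of the hypothesis and note that, $M_h$ and the $T_n$ being built from multiplication and rank-one pieces, left and right versions are interchangeable.
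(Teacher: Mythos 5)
Your proposal is essentially correct but follows a genuinely different route from the paper in both directions. For sufficiency, the paper simply transports the TDZ witness from $L^\infty(\mu)$ into $\mathcal{B}(L^p(\mu))$: given $\|h_n\|_\infty=1$ with $\|hh_n\|_\infty\to0$, it takes the operators $M_{h_n}$ and uses $\|M_{h_n}\|=\|h_n\|_\infty$ and $\|M_hM_{h_n}\|=\|hh_n\|_\infty\to0$; this is shorter than your rank-one construction but silently relies on the isometry $\|M_g\|_{\mathcal{B}(L^p(\mu))}=\|g\|_\infty$ (valid here by $\sigma$-finiteness), whereas your operators $f\mapsto\psi(f)\,\chi_{E_n}/\mu(E_n)^{1/p}$ only need the easy inequality $\|hu_n\|_p\le\|h|_{E_n}\|_\infty$ at the cost of invoking Hahn--Banach for $\psi$ and Theorem~\ref{anurag8} for the sets $E_n$. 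For necessity, the paper argues by contraposition: if $h$ is not a TDZ, then by Corollary~\ref{anurag9} it is invertible in $L^\infty(\mu)$, hence $M_h$ is invertible in $\mathcal{B}(L^p(\mu))$ and so cannot be a TDZ; your Chebyshev argument is more constructive and directly exhibits $0\in\mathrm{ess.range}(h)$, which is a nice alternative. The one soft spot is your parenthetical claim that the right-sided case $\|T_nM_h\|\to0$ is ``handled symmetrically'': your direct argument extracts $f_n=T_ng_n$ from the \emph{range} of $T_n$, and there is no symmetric way to produce a function $f_n$ with $\|hf_n\|_p$ small and $\|f_n\|_p$ bounded below from the hypothesis $\|T_nM_h\|\to0$ alone. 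The clean repair is exactly the paper's contrapositive: if $h$ were not a TDZ it would be invertible, and then $1=\|T_n\|\le\|T_nM_h\|\,\|M_{h^{-1}}\|\to0$, a contradiction; so you should either adopt that argument for both one-sided cases or restrict the Chebyshev argument to the left-sided case and cite invertibility for the other.
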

\begin{proof}
	Suppose $h \in L^\infty(\mu)$ be a TDZ. Let $\{h_n\}_{n=1}^{\infty}$ be a sequence in $L^\infty(\mu)$ such that $$\Vert h_n\Vert=1 \ \forall \  n\in \mathbb{N}~ and ~\Vert h h_n\Vert \to 0.$$\\
	For each $n\geq1,$ define $M_{h_n}:L^p(\mu) \to L^p(\mu)$ as $M_{h_n}(f)=h_n f.$ Then $$\Vert M_{h_n}\Vert=\Vert h_n\Vert=1 \ \forall \ n\in \mathbb{N}~ and ~M_hM_{h_n}(f)=h h_n f.$$ Hence $\Vert M_hM_{h_n}\Vert=\Vert h h_n\Vert \to 0.$  This shows that $M_h$ is a TDZ in $\mathcal{B}(L^p(\mu))$. \\
	For the converse, suppose $h \in L^\infty(\mu)$ is not a TDZ in $L^\infty(\mu)$. Then by Corollary\ref{anurag9}, $h$ is a regular element in $L^\infty(\mu)$. Therefore there exists a $g \in L^\infty(\mu)$ such that $h \cdot g=g \cdot h=1.$  This implies that $M_{h g}=\text{I}.$ Hence $M_{h g}=M_{h}M_{g}=M_{g}M_{h}=\text{I}.$ This shows that $M_h$ is regular element in $\mathcal{B}(L^p(\mu))$. Hence $M_h$ can not be a TDZ.\\
	
\end{proof}
\subsection{Composition operator on $L^2(\mu)$ as TDZ} \label{sec3.4}

\begin{theorem} \label{Anurag7}
	Let $H$ be a Hilbert space and $T\in \mathcal{B}(H).$ Then $T$ is a TDZ in $\mathcal{B}(H)$ if and only if $T^*T$ is a TDZ in $\mathcal{B}(H).$
\end{theorem}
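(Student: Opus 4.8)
The plan is to exploit the $C^*$-identity $\|T^*T\| = \|T\|^2$, which holds in $\mathcal{B}(H)$, and more generally the inequality $\|T^*TS\| \geq \|TS\|^2/\|S\|$ for any $S$, together with its adjoint-side analogue. I would prove both directions by constructing the witnessing sequence for one operator out of the witnessing sequence for the other.

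First I would do the direction ``$T$ a TDZ $\Rightarrow$ $T^*T$ a TDZ.'' Suppose there is a sequence $\{S_n\}$ with $\|S_n\| = 1$ and (say) $\|TS_n\| \to 0$; the case $\|S_nT\|\to 0$ is handled by passing to adjoints, since $\|S_nT\| = \|T^*S_n^*\|$ and $\{S_n^*\}$ is again a norm-one sequence. From $\|TS_n\|\to 0$ I get $\|T^*TS_n\| \leq \|T^*\|\,\|TS_n\| = \|T\|\,\|TS_n\| \to 0$, so $\{S_n\}$ itself witnesses that $T^*T$ is a TDZ. This direction is essentially immediate.

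The substantive direction is ``$T^*T$ a TDZ $\Rightarrow$ $T$ a TDZ.'' Suppose $\|T^*TS_n\| \to 0$ with $\|S_n\| = 1$ (again, the one-sided hypothesis $\|S_nT^*T\|\to 0$ reduces to this by taking adjoints and using that $T^*T$ is self-adjoint, so $\|S_nT^*T\| = \|T^*TS_n^*\|$). Then for each $n$,
\[
\|TS_n\|^2 = \|(TS_n)^*(TS_n)\| = \|S_n^*T^*TS_n\| \leq \|S_n^*\|\,\|T^*TS_n\| = \|T^*TS_n\| \to 0,
\]
using the $C^*$-identity on the operator $TS_n$ in the first step. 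Hence $\|TS_n\| \to 0$ with $\|S_n\| = 1$, so $T$ is a TDZ. I should also dispose of the trivial edge case $T = 0$ (then both $T$ and $T^*T$ are TDZs, using any norm-one sequence), although this is in fact covered by the argument above.

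The main thing to be careful about is the bookkeeping between left and right topological divisors: the definition of TDZ in a Banach algebra allows either $x x_n \to 0$ or $x_n x \to 0$, so each direction has two sub-cases, and the reduction of the ``$x_n x$'' sub-case to the ``$x x_n$'' sub-case must go through the adjoint operation and the self-adjointness of $T^*T$. I do not expect any genuine obstacle beyond this; the estimates are one-line consequences of submultiplicativity and the $C^*$-identity.
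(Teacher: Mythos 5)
Your two main estimates are exactly the paper's: the forward bound $\|T^*TS_n\|\le\|T^*\|\,\|TS_n\|$, and your appeal to the $C^*$-identity on $TS_n$ in the converse is precisely the paper's computation $\|TT_nx\|^2=\langle T^*TT_nx,\,T_nx\rangle\le\|T^*TT_nx\|\,\|T_nx\|$. Your reduction of the sub-case $\|S_nT^*T\|\to 0$ to $\|T^*TS_n^*\|\to 0$ via the self-adjointness of $T^*T$ is also correct.

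The genuine gap is in the forward direction, in the sub-case you claim to dispose of ``by passing to adjoints.'' From $\|S_nT\|\to 0$ you get $\|T^*S_n^*\|\to 0$, i.e.\ $T^*$ is a left TDZ; running your left-handed argument on $T^*$ then yields $\|TT^*S_n^*\|\le\|T\|\,\|T^*S_n^*\|\to 0$, which shows that $TT^*$ is a TDZ --- not $T^*T$. This cannot be repaired: with the ``or'' definition of TDZ used in the paper, the unilateral shift $S$ on $\ell^2$ is a right zero divisor (take $P=I-SS^*$, so $PS=0$ and $\|P\|=1$), hence a TDZ, yet $S^*S=I$ is invertible and therefore not a TDZ. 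So the equivalence as literally stated fails for elements that are only right TDZs; what your estimates (and the paper's) actually prove is that $T$ is a \emph{left} TDZ if and only if $T^*T$ is a TDZ, equivalently if and only if $T$ fails to be bounded below. The paper's own proof silently takes the witnessing sequence to satisfy $\|TT_n\|\to 0$ and never addresses the right-handed case, so your instinct that the left/right bookkeeping is the delicate point was correct --- but the adjoint trick does not close it, and no argument can.
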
\label{Anurag8}
\begin{proof}
	Let $T\in \mathcal{B}(H)$ be a TDZ. Hence there exists a sequence $\{T_n\}_{n=1}^\infty$ in $\mathcal{B}(H)$ such that $\|T_n\|=1~\forall~n\in \mathbb{N}$ and $\|TT_n\| \to 0.$ Since $\|T_n\|=\|T^*_n\|=1~\forall~n\in \mathbb{N}$ and $\|T^*\|=\|T\|,$ consequently $$\|(T^*T)T_n\|\leq \|T^*\|\|TT_n\|\to 0,$$ and $$\|T^*_n(T^*T)\|\leq\|T^*_nT^*\|\|T\|=\|TT_n\|\|T\| \to 0.$$  Hence $T^*T$ is a TDZ.\\
	Conversely, suppose $T^*T$ is a TDZ in $\mathcal{B}(H).$ Then there exists a sequence $\{T_n\}_{n=1}^\infty$ in $\mathcal{B}(H)$ such that $\|T_n\|=1~\forall~n\in \mathbb{N}$ and $\|T^*TT_n\|\to 0.$ Observe that, for each $x\in H$ $$\|TT_nx\|^2=\langle T^*TT_nx,\; T_nx \rangle \leq \|T^*TT_nx\|\|T_nx\|.$$ Hence $$\|TT_n\|^2 \leq \|T^*TT_n\|\|T_n\|=\|T^*TT_n\| \to 0.$$
	Thus, $T$ is TDZ.
\end{proof}	
The following Remark can be readily obtained.
\begin{lemma}\label{Anurag9}
	Let $H$ be a Hilbert space and $T\in \mathcal{B}(H).$ Then $T$ is a left(right) TDZ in $\mathcal{B}(H)$ if and only if  $T^*$ is a right(left) TDZ in $\mathcal{B}(H).$
\end{lemma}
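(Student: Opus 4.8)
The plan is to prove the two directions of the equivalence separately, exploiting the adjoint relationship between left and right topological divisors of zero. First I would recall the definitions: $T$ is a \emph{left} TDZ if there is a sequence $\{T_n\}$ with $\|T_n\|=1$ and $\|TT_n\|\to 0$, and a \emph{right} TDZ if $\|T_nT\|\to 0$. The key algebraic fact is that for $S,R\in\mathcal{B}(H)$ one has $(SR)^*=R^*S^*$ and $\|S^*\|=\|S\|$, so taking adjoints converts a product $TT_n$ into $T_n^*T^*$ and preserves norms.

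For the forward direction, suppose $T$ is a left TDZ, witnessed by $\{T_n\}$ with $\|T_n\|=1$ and $\|TT_n\|\to 0$. Set $S_n=T_n^*$. Then $\|S_n\|=\|T_n^*\|=\|T_n\|=1$ for every $n$, and $\|S_nT^*\|=\|T_n^*T^*\|=\|(TT_n)^*\|=\|TT_n\|\to 0$. Hence $\{S_n\}$ witnesses that $T^*$ is a right TDZ. The argument for the right-TDZ case is entirely symmetric: if $\{T_n\}$ satisfies $\|T_n\|=1$ and $\|T_nT\|\to 0$, then $\{T_n^*\}$ satisfies $\|T_n^*\|=1$ and $\|T^*T_n^*\|=\|(T_nT)^*\|=\|T_nT\|\to 0$, so $T^*$ is a left TDZ.

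For the converse, I would simply apply the forward direction to the operator $T^*$ in place of $T$, using that $(T^*)^*=T$: if $T^*$ is a right (resp. left) TDZ, then by what was just shown $(T^*)^*=T$ is a left (resp. right) TDZ. Thus the equivalence is established in both the left and right versions simultaneously, which is exactly the statement of the lemma.

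I do not anticipate any real obstacle here; this is a routine manipulation with adjoints, and the only thing to be careful about is keeping the left/right bookkeeping straight and invoking $\|S^*\|=\|S\|$ and $(SR)^*=R^*S^*$ at the right moments. If one wanted, the whole lemma could be stated and proved in a general $C^*$-algebra or Banach $*$-algebra with isometric involution with no change, but since the paper only needs it for $\mathcal{B}(H)$ I would phrase it in that setting.
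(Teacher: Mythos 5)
Your proposal is correct and follows essentially the same route as the paper: take adjoints of the witnessing sequence, use $\|S^*\|=\|S\|$ and $(SR)^*=R^*S^*$, and obtain the converse by applying the forward implication to $T^*$. If anything, you are slightly more careful than the paper in spelling out both the left and right versions of the forward direction before invoking $(T^*)^*=T$.
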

\begin{proof}
	Let $T\in \mathcal{B}(H)$ be a left TDZ. Then there exists a sequence $\{T_n\}_{n=1}^\infty$ in $\mathcal{B}(H)$ such that $\|T_n\|=1~\forall~n\in \mathbb{N}$ and $\|TT_n\| \to 0.$ Since $\|T_n\|=\|T^*_n\|=1~\forall~n\in \mathbb{N}$ and $$\|(T^*_nT^*)\|=\|(TT_n)^*\|=\|TT_n\|\to 0.$$ Hence $T^*$ is a right TDZ.\\
	Conversely, suppose $T\in \mathcal{B}(H)$ and let $T^*$ be a right TDZ. Then, from the above argument, $(T^*)^*=T$ is a left TDZ in $\mathcal{B}(H).$
\end{proof}
\begin{theorem}\label{Anurag10}
	Composition operator $C_\phi$ on $L^2(\mu)$ is a TDZ in $\mathcal{B}(L^2(\mu))$ if and only if we can find a sequence $\{E_n\}_{n=1}^{\infty}$ of measurable sets in $X$ such that $\mu(E_n)>0$ for each $n\geq 1$ and $\|\frac{d\mu \phi^{-1}}{d\mu}|_{E_n} \| \to 0.$
\end{theorem}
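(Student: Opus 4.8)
The plan is to combine two results already established in the excerpt: Theorem \ref{Anurag7}, which says $C_\phi$ is a TDZ in $\mathcal{B}(L^2(\mu))$ if and only if $C_\phi^*C_\phi$ is a TDZ, and Theorem \ref{RN derivative}, which identifies $C_\phi^*C_\phi = M_h$ where $h = \frac{d\mu\phi^{-1}}{d\mu} \in L^\infty(\mu)$. Chaining these, $C_\phi$ is a TDZ in $\mathcal{B}(L^2(\mu))$ if and only if $M_h$ is a TDZ in $\mathcal{B}(L^2(\mu))$. Then Theorem \ref{Anurag6} (with $p=2$) converts this to: $M_h$ is a TDZ in $\mathcal{B}(L^2(\mu))$ if and only if $h$ is a TDZ in $L^\infty(\mu)$. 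Finally, Theorem \ref{anurag8} characterizes $h$ being a TDZ in $L^\infty(\mu)$ exactly as the existence of a sequence $\{E_n\}$ of measurable sets with $\mu(E_n)>0$ and $\|h|_{E_n}\| \to 0$, which is the claimed condition with $h = \frac{d\mu\phi^{-1}}{d\mu}$.

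So the proof is essentially a short concatenation of implications. First I would note that $C_\phi^*C_\phi = M_{\frac{d\mu\phi^{-1}}{d\mu}}$ by Theorem \ref{RN derivative}, and that $\frac{d\mu\phi^{-1}}{d\mu} \in L^\infty(\mu)$ since $C_\phi$ is assumed bounded on $L^2(\mu)$. Next, apply Theorem \ref{Anurag7} to get the equivalence of "$C_\phi$ is a TDZ" and "$C_\phi^*C_\phi = M_{\frac{d\mu\phi^{-1}}{d\mu}}$ is a TDZ in $\mathcal{B}(L^2(\mu))$". Then apply Theorem \ref{Anurag6} with $p=2$ and $h = \frac{d\mu\phi^{-1}}{d\mu}$ to reduce to "$\frac{d\mu\phi^{-1}}{d\mu}$ is a TDZ in $L^\infty(\mu)$". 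Finally, invoke Theorem \ref{anurag8} to restate this as the existence of the desired sequence $\{E_n\}$.

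I do not anticipate a genuine obstacle here, since every ingredient is available. The only point that deserves a moment's care is making sure the hypotheses of each cited theorem are actually met: in particular, that $C_\phi$ being a bounded composition operator on $L^2(\mu)$ guarantees $\mu\phi^{-1} \ll \mu$ and $\frac{d\mu\phi^{-1}}{d\mu} \in L^\infty(\mu)$ (so that $M_{\frac{d\mu\phi^{-1}}{d\mu}}$ is indeed a well-defined element of $\mathcal{B}(L^2(\mu))$ to which Theorems \ref{Anurag6} and \ref{Anurag7} apply), and that $L^2(\mu)$ is a Hilbert space so Theorem \ref{Anurag7} is applicable. One should also observe that $C_\phi^*C_\phi$ being a TDZ (rather than specifically a left or right TDZ) is exactly what Theorem \ref{Anurag7} delivers, and that Theorem \ref{anurag8} is symmetric in left/right since $L^\infty(\mu)$ is commutative, so no left/right bookkeeping is needed. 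With these remarks in place the chain of "if and only if" statements closes cleanly and yields the theorem.
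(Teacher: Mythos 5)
Your proposal is correct and follows essentially the same route as the paper's own proof: the paper likewise chains Theorem \ref{Anurag7} ($C_\phi$ TDZ iff $C_\phi^*C_\phi$ TDZ), Theorem \ref{RN derivative} ($C_\phi^*C_\phi=M_{\frac{d\mu\phi^{-1}}{d\mu}}$), Theorem \ref{Anurag6}, and Theorem \ref{anurag8}. Your added remarks on verifying the hypotheses are sensible but do not change the argument.
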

\begin{proof}
	Let $C_{\phi}\in \mathcal{B}(L^2(\mu))$ be a TDZ. Then by Theorem \ref{Anurag7}, $C_{\phi}^*C_{\phi}$ is also a TDZ. Further, by Theorem \ref{RN derivative}, $$C_{\phi}^*C_{\phi}=M_\frac{d\mu \phi^{-1}}{d\mu}.$$ Hence, by Theorem \ref{Anurag6}, $\frac{d\mu \phi^{-1}}{d\mu}$ is a TDZ in $L^\infty(\mu)$. Consequently, by Theorem \ref{anurag8}, we can find a sequence $\{E_n\}_{n=1}^{\infty}$ of measurable sets in $X$ with $\mu(E_n)>0$ for each $n\geq 1$ and $\|\frac{d\mu \phi^{-1}}{d\mu}|_{E_n} \| \to 0.$  \\
	Conversely, suppose that we have a sequence $\{E_n\}_{n=1}^{\infty}$ of measurable sets in $X$ such that $\mu(E_n)>0$ for each $n\geq 1$ and $\|\frac{d\mu \phi^{-1}}{d\mu}|_{E_n} \| \to 0.$ Then, by Theorem \ref{anurag8}, $\frac{d\mu \phi^{-1}}{d\mu}$ is a TDZ in $L^\infty(\mu)$. Now, combining  Theorem \ref{Anurag6} and Theorem \ref{Anurag7}, the desired conclusion follows.
\end{proof}	

\begin{example}
	Let $\mu$ denote the counting measure on $\mathbb{N}$ and $\phi: \mathbb{N} \to \mathbb{N}$ be defined as $\phi(n)=n+1.$ Let $C_\phi$ denote the composition operator on $\ell^2$ induced by $\phi.$ It is easy to see that $$\frac{d\mu \phi^{-1}}{d\mu}(n)=\begin{cases} {0}, & n=1, \\ 1, & n>1. \end{cases}$$
	Hence, by above theorem, $C_\phi$ is a TDZ in $\mathcal{B}(\ell^2).$
\end{example}

\subsection{Composition operator on $\mathbb{H}^p(\mathbb{D})~~(1\leq p<\infty)$ as zero-divisor} \label{sec3.5}\

In this section, we give a characterization of the composition operators on the Hardy space $\mathbb{H}^p(\mathbb{D})$ which are zero-divisors in $\mathcal{B}(\mathbb{H}^p(\mathbb{D})).$ We begin with the following theorem.\\
\begin{proposition}\label{9}
	Let $X$ be a Banach space and $T\in \mathcal{B}(X).$ Then $T$ is a right zero-divisor if and only if $\mathcal{R}(T)$ is not dense in $X$.
\end{proposition}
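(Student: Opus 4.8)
The statement concerns the following equivalence for a bounded operator $T$ on a Banach space $X$: being a \emph{right} zero-divisor (i.e.\ $ST = 0$ for some nonzero $S \in \mathcal{B}(X)$, composing $S$ after $T$) is equivalent to $\mathcal{R}(T)$ failing to be dense in $X$. The natural strategy is to translate ``$ST = 0$'' into the condition ``$S$ annihilates $\mathcal{R}(T)$'' and then exploit the Hahn--Banach theorem to manufacture (or preclude) such an $S$.

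For the forward direction, suppose $T$ is a right zero-divisor, so $ST = 0$ for some $S \neq 0$. Then $S(Tx) = 0$ for every $x \in X$, i.e.\ $\mathcal{R}(T) \subseteq \ker S$. Since $\ker S$ is closed and $S \neq 0$, $\ker S$ is a proper closed subspace containing $\mathcal{R}(T)$, hence $\overline{\mathcal{R}(T)} \subseteq \ker S \subsetneq X$; thus $\mathcal{R}(T)$ is not dense.

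For the converse, assume $\overline{\mathcal{R}(T)} =: M$ is a proper closed subspace of $X$. Pick $x_0 \in X \setminus M$; by Hahn--Banach there is a functional $\varphi \in X^*$ with $\varphi|_M = 0$ and $\varphi(x_0) = 1$, in particular $\varphi \neq 0$. Fix also any nonzero vector $v \in X$ and define $S \in \mathcal{B}(X)$ by $Sy = \varphi(y)\, v$. Then $S \neq 0$ (since $\varphi \neq 0$ and $v \neq 0$), and for every $x \in X$ we have $STx = \varphi(Tx)\, v = 0$ because $Tx \in M = \ker \varphi$. Hence $ST = 0$ with $S \neq 0$, so $T$ is a right zero-divisor.

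The argument is essentially routine; the only point requiring care is the choice of convention for ``right'' versus ``left'' zero-divisor (which composition order is being called ``right''), so that the Hahn--Banach functional is placed on the correct side — here it must kill $\mathcal{R}(T)$, which forces the rank-one operator $S$ to act \emph{after} $T$. No genuine obstacle arises beyond bookkeeping; the key ideas are simply ``$ST=0 \iff \mathcal{R}(T) \subseteq \ker S$'' together with the standard Hahn--Banach separation of a point from a proper closed subspace.
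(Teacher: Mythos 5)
Your proof is correct and follows essentially the same route as the paper's: one direction observes that $ST=0$ forces $S$ to vanish on the dense range (equivalently, $\overline{\mathcal{R}(T)}\subseteq\ker S\subsetneq X$), and the other uses Hahn--Banach to produce a functional annihilating $\overline{\mathcal{R}(T)}$ and builds the rank-one operator $S=\varphi(\cdot)\,v$ with $ST=0$. You also use the same convention for ``right zero-divisor'' ($ST=0$) as the paper, so there is nothing to add.
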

\begin{proof}
	Suppose $T\in \mathcal{B}(X)$ and $\mathcal{R}(T)$ is not dense in $X.$ Then by Hahn-Banach theorem, there exists a non-zero bounded linear functional $f$ on $X$ such that $f(y)=0$ for all $y\in \overline{T(X)}.$ For any nonzero vector $x_0\in X,$ define $S:=x_0\bigotimes f:X\to X$ as
	$S(x)=f(x)x_0.$ Then, $S$ is a nonzero element of $\mathcal{B}(X)$ such that
	
	\begin{align*}
		(ST)(x)&=f(T(x))x_0\\& =0~~\text{for all}~ x\in X.
	\end{align*}
	Hence $T$ is a right zero-divisor.\\
	Conversely, suppose $\mathcal{R}(T)$ is dense in $X.$ Let $S \in \mathcal{B}(X)$ be such that $S(Tx)=0~\forall~x\in X.$ Then $S\equiv 0$ on $\mathcal{R}(T).$ Since $\mathcal{R}(T)$ is dense in $X$ and $S$ is continuous, it follows that $S\equiv 0.$ Consequently, $T$ can not be a right zero-divisor.
\end{proof}

\begin{remark}\label{10}
	Proposition \ref{9} implies that $C_{\phi}\in B(\mathbb{H}^2(\mathbb{D}))$ is a right zero-divisor if and only if $\mathcal{R}(C_{\phi})$ is not dense in $\mathbb{H}^2(\mathbb{D}).$
\end{remark}
\begin{theorem}
	Let $\phi:\mathbb{D} \to\mathbb{D}$ be any analytic map which either maps $\mathbb{D}$ conformally on to a Carath$\acute{e}$odory domain or $\phi$ is  a weak* generator of $\mathbb{H}^\infty(\mathbb{D}).$ Then $C_{\phi}$ is not a right zero-divisor in $\mathfrak{B}$. 
\end{theorem}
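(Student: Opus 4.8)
The plan is to apply Proposition \ref{9} together with the two density results quoted in the preliminaries, namely Theorem \ref{Caratheodory} and Theorem \ref{weak*generator}. By Proposition \ref{9} (in the form recorded in Remark \ref{10}), an operator $T\in\mathcal{B}(X)$ fails to be a right zero-divisor precisely when $\mathcal{R}(T)$ is dense in $X$. So the entire statement reduces to verifying that $\mathcal{R}(C_\phi)$ is dense in $\mathbb{H}^p(\mathbb{D})$ under either of the two hypotheses on $\phi$.

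First I would dispose of the case where $\phi$ maps $\mathbb{D}$ conformally onto a Carath\'eodory domain: Theorem \ref{Caratheodory} states verbatim that then $\mathcal{R}(C_\phi)$ is dense in $\mathbb{H}^p(\mathbb{D})$ for $1\le p<\infty$, so Proposition \ref{9} immediately gives that $C_\phi$ is not a right zero-divisor. Next I would handle the case where $\phi$ is a weak* generator of $\mathbb{H}^\infty(\mathbb{D})$: Theorem \ref{weak*generator} gives density of $\mathcal{R}(C_\phi)$ in $\mathbb{H}^p(\mathbb{D})$ for $0<p<\infty$, which in particular covers $1\le p<\infty$, and again Proposition \ref{9} finishes the argument. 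In both cases one should also note at the outset that $C_\phi$ is indeed a bounded operator on $\mathbb{H}^p(\mathbb{D})$ by Littlewood's subordination principle, so that talking about $C_\phi\in\mathfrak{B}$ makes sense.

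I would be careful about one point of bookkeeping: the symbol $\mathfrak{B}$ in the statement should be read as $\mathcal{B}(\mathbb{H}^p(\mathbb{D}))$, and Proposition \ref{9} is stated for a general Banach space $X$, so it applies directly with $X=\mathbb{H}^p(\mathbb{D})$; Remark \ref{10} only spells out the $p=2$ instance, but the proof of Proposition \ref{9} used nothing about $p$, so no extra work is needed. There is genuinely no hard step here — the theorem is a corollary of Proposition \ref{9} and the two cited density theorems. If anything, the only thing to watch is that the two hypotheses are presented as an ``either/or'', so the proof naturally splits into two short parallel paragraphs, each invoking the appropriate density theorem and then Proposition \ref{9}.

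Concretely, the write-up would look like: ``By Littlewood's subordination principle, $C_\phi\in\mathcal{B}(\mathbb{H}^p(\mathbb{D}))$. If $\phi$ maps $\mathbb{D}$ conformally onto a Carath\'eodory domain, then by Theorem \ref{Caratheodory} the range $\mathcal{R}(C_\phi)$ is dense in $\mathbb{H}^p(\mathbb{D})$. If instead $\phi$ is a weak* generator of $\mathbb{H}^\infty(\mathbb{D})$, then by Theorem \ref{weak*generator} the range $\mathcal{R}(C_\phi)$ is again dense in $\mathbb{H}^p(\mathbb{D})$. In either case, Proposition \ref{9} shows that $C_\phi$ is not a right zero-divisor in $\mathcal{B}(\mathbb{H}^p(\mathbb{D}))$.'' That is the whole proof; no calculation is involved.
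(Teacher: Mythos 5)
Your proposal is correct and follows essentially the same route as the paper: split into the two cases, invoke Theorem \ref{Caratheodory} (resp.\ Theorem \ref{weak*generator}) to get density of $\mathcal{R}(C_\phi)$, and conclude via Proposition \ref{9}. The only difference is cosmetic — the paper carries out the argument in $\mathbb{H}^2(\mathbb{D})$ while you keep track of general $p$, which if anything is slightly more careful.
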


\begin{proof}
	Let $\phi:\mathbb{D} \to\mathbb{D}$ be analytic map which maps $\mathbb{D}$ conformally on to a Carath$\acute{e}$odory domain. Then by Theorem \ref{Caratheodory}, $\mathcal{R}(C_{\phi})$ is dense in $\mathbb{H}^{2}(\mathbb{D}).$ Now, by Proposition \ref{9},  $C_{\phi}$ can not be a right zero-divisor in $\mathfrak{B}(\mathbb{H}^{2}(\mathbb{D}))$.\\ Further, if $\phi:\mathbb{D} \to\mathbb{D}$ is analytic map which is a weak* generator of $\mathbb{H}^\infty(\mathbb{D}),$ then, by Theorem \ref{weak*generator}, $\mathcal{R}(C_{\phi})$ is dense in $\mathbb{H}^{2}(\mathbb{D}).$ Then again, $C_{\phi}$ can not be a right zero-divisor in $\mathfrak{B}(\mathbb{H}^{2}(\mathbb{D}))$. 
\end{proof}

\begin{theorem}\label{anurag12}
	The composition operator $C_{\phi}\in \mathcal{B}(\mathbb{H}^p(\mathbb{D}))$ is left zero-divisor if and only if $\phi$ is a constant function.
\end{theorem}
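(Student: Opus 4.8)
My plan is to reduce the statement to the observation that, for any Banach space $X$, an operator $T \in \mathcal{B}(X)$ is a left zero-divisor if and only if $T$ is not injective, and then to determine when $C_\phi$ is injective on $\mathbb{H}^p(\mathbb{D})$.

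First I would record the left-handed companion of Proposition \ref{9}: if $T$ is injective, then $TS = 0$ forces $S(x) \in \ker T = \{0\}$ for every $x$, so $S = 0$; conversely, if $\ker T \neq \{0\}$, pick $0 \neq v \in \ker T$ and a nonzero $\psi \in X^*$ and set $S = v \otimes \psi$, i.e.\ $S(x) = \psi(x)\,v$, which is a nonzero bounded operator with $TS = 0$. Thus ``$C_\phi$ is a left zero-divisor'' is equivalent to ``$C_\phi$ is not injective on $\mathbb{H}^p(\mathbb{D})$.''

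For the ``if'' direction, suppose $\phi$ is constant, say $\phi \equiv c$ with $|c| < 1$. Then $C_\phi f = f \circ \phi$ is the constant function with value $f(c)$, so $C_\phi$ has one-dimensional range; in particular $z - c$ lies in $\ker C_\phi$, so $C_\phi$ is not injective and hence a left zero-divisor. Concretely, one can exhibit the witness $S = M_{z-c}$, which is bounded on $\mathbb{H}^p(\mathbb{D})$ since $z - c \in \mathbb{H}^\infty(\mathbb{D})$, is nonzero, and satisfies $C_\phi S = 0$ because $(Sf)(c) = (c-c)f(c) = 0$ for every $f$.

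For the ``only if'' direction, assume $\phi$ is non-constant and suppose $C_\phi g = 0$ for some $g \in \mathbb{H}^p(\mathbb{D})$; then the analytic function $g$ vanishes on $\phi(\mathbb{D})$, which by the open mapping theorem is a nonempty open subset of the connected domain $\mathbb{D}$, whence $g \equiv 0$ by the identity theorem. So $C_\phi$ is injective, and by the reduction above it is not a left zero-divisor. The whole argument is short; the only non-formal ingredient is this analytic-uniqueness step, and the main points requiring care are the correct unwinding of ``left zero-divisor'' ($C_\phi S = 0$ means $(Sf) \circ \phi \equiv 0$ for all $f \in \mathbb{H}^p(\mathbb{D})$) and checking that $M_{z-c}$ really is a nonzero element of $\mathcal{B}(\mathbb{H}^p(\mathbb{D}))$ annihilated on the left by $C_\phi$.
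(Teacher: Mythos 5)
Your proof is correct and follows essentially the same route as the paper: the witness for the constant case is the same multiplication operator $f\mapsto (z-z_0)f$, and the converse rests on the same observation that a nonzero analytic function cannot vanish on $\mathcal{R}(\phi)=\phi(\mathbb{D})$, which is open when $\phi$ is non-constant. The only difference is organizational: you factor the argument through a general lemma (an operator on a Banach space is a left zero-divisor if and only if it is not injective, the left-handed companion of Proposition \ref{9}), which the paper instead argues inline and somewhat tersely in its final step.
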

\begin{proof}
	Let $\phi:\mathbb{D} \to \mathbb{D}$ be defined as $\phi(z)=z_0~\forall~z\in \mathbb{D}.$ Let $g_{0}(z)=z-z_0$ and $T:\mathbb{H}^p(\mathbb{D}) \to \mathbb{H}^p(\mathbb{D})$ be defined as $T(f)=g_0\cdot f .$ Now, for each $f\in \mathbb{H}^p(\mathbb{D})$ and $z\in \mathbb{D}$ 
	\begin{align*}
		C_\phi(T)(f)(z)&=(g_0\cdot f)(\phi)(z)\\&=(g_0\cdot f)(z_0)\\&=0.
	\end{align*}
  Hence $C_\phi$ is a left zero-divisor.\\
	Conversely, suppose $C_\phi$ is a left zero-divisor. Then there exists $0\neq T \in \mathcal{B}(\mathbb{H}^p(\mathbb{D}))$ such that $C_\phi(T)(f)=0 ~\forall f \in \mathbb{H}^p(\mathbb{D}).$ This implies that $$T(f)\vert_{\mathcal{R}(\phi)}=0~\forall f \in \mathbb{H}^p(\mathbb{D}).$$ Hence $\mathcal{R}(\phi)$ must be a singleton set.
\end{proof}

\begin{theorem}\label{anur11}
	If $\phi:\mathbb{D} \to \mathbb{D}$ is a constant function, then $C_{\phi}\in \mathcal{B}(\mathbb{H}^p(\mathbb{D}))$ is right zero-divisor.
\end{theorem}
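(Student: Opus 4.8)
The plan is to show directly that a composition operator induced by a constant map has non-dense range, and then invoke Proposition \ref{9} (in the form of Remark \ref{10}, extended to $\mathbb{H}^p(\mathbb{D})$) to conclude it is a right zero-divisor. Suppose $\phi(z)=z_0$ for all $z\in\mathbb{D}$, where $z_0\in\mathbb{D}$ is fixed. Then for every $f\in\mathbb{H}^p(\mathbb{D})$ we have $C_\phi f = f\circ\phi = f(z_0)\cdot\mathbf{1}$, a constant function. Hence $\mathcal{R}(C_\phi)$ is contained in the one-dimensional subspace of constant functions in $\mathbb{H}^p(\mathbb{D})$, which is certainly a proper closed subspace (the function $z\mapsto z$ lies in $\mathbb{H}^p(\mathbb{D})$ but is not constant). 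In particular $\mathcal{R}(C_\phi)$ is not dense in $\mathbb{H}^p(\mathbb{D})$.

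Next I would produce the annihilating operator explicitly rather than appealing to Proposition \ref{9} verbatim, since that proposition was stated for a Banach space $X$ and here the ambient algebra is $\mathcal{B}(\mathbb{H}^p(\mathbb{D}))$; the construction there applies word for word. By the Hahn--Banach theorem there is a nonzero bounded linear functional $F$ on $\mathbb{H}^p(\mathbb{D})$ vanishing on the closed subspace of constants; for instance one may take $F(f)=f'(0)$, which is bounded on $\mathbb{H}^p(\mathbb{D})$ by the Cauchy estimates and annihilates every constant. Fix any nonzero $g\in\mathbb{H}^p(\mathbb{D})$ and define $S\colon\mathbb{H}^p(\mathbb{D})\to\mathbb{H}^p(\mathbb{D})$ by $S(f)=F(f)\,g$. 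Then $S\in\mathcal{B}(\mathbb{H}^p(\mathbb{D}))$ and $S\neq 0$, while for every $f\in\mathbb{H}^p(\mathbb{D})$,
\begin{align*}
(S\,C_\phi)(f) &= S\big(f(z_0)\cdot\mathbf{1}\big) = f(z_0)\,F(\mathbf{1})\,g = 0.
\end{align*}
Thus $S\,C_\phi = 0$ with $S\neq 0$, which exhibits $C_\phi$ as a right zero-divisor in $\mathcal{B}(\mathbb{H}^p(\mathbb{D}))$.

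There is essentially no obstacle here: the only point requiring minor care is the choice of a concrete bounded functional on $\mathbb{H}^p(\mathbb{D})$ killing the constants for every $p$ with $1\le p<\infty$, and the functional $f\mapsto f'(0)$ (or, equivalently, the coefficient functional $f\mapsto\hat f(1)$ for the Taylor expansion $f(z)=\sum_{k\ge 0}\hat f(k)z^k$) serves for all such $p$. Alternatively one can bypass the functional entirely and simply cite Proposition \ref{9} together with the range computation above. Note that this theorem is the natural companion to Theorem \ref{anurag12}: together they show that $C_\phi$ on $\mathbb{H}^p(\mathbb{D})$ is a (two-sided, hence either left or right) zero-divisor precisely when $\phi$ is constant, the left case and the right case each being governed by constancy of $\phi$.
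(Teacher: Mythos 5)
Your proof is correct and, at its core, does the same job as the paper's: both exhibit a nonzero bounded operator that annihilates the constant functions, which is exactly the range of $C_\phi$ when $\phi\equiv z_0$. The paper does this with the backward shift $T\bigl(\sum a_n z^n\bigr)=\sum a_{n+1}z^n$, while you use the rank-one operator $S(f)=F(f)\,g$ built from the coefficient functional $F(f)=f'(0)$, after first observing that $\mathcal{R}(C_\phi)$ lies in the one-dimensional space of constants and so is not dense; this lets you fall back on Proposition \ref{9} if desired. Your route is slightly more systematic (it reduces the theorem to the general non-dense-range criterion already proved in the paper), whereas the paper's shift operator is a self-contained one-line verification; both are fine, and your explicit check that $f\mapsto f'(0)$ is bounded on $\mathbb{H}^p(\mathbb{D})$ for all $1\le p<\infty$ is the only analytic point that needs care, and you handle it correctly via Cauchy estimates.

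One genuine error, though it lies outside the proof proper: your closing sentence asserts that $C_\phi$ is a right zero-divisor \emph{precisely} when $\phi$ is constant. That converse is false, and the paper says so explicitly: the lemma immediately following this theorem shows that $\phi(z)=z^k$ with $k\ge 2$ induces a right zero-divisor even though $\phi$ is far from constant (its range misses the monomial $z$, so the range of $C_\phi$ is again not dense). Constancy characterizes the \emph{left} zero-divisors (Theorem \ref{anurag12}) but is only sufficient, not necessary, for the right ones. You should delete or correct that remark.
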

\begin{proof}	
	Let $\phi:\mathbb{D} \to \mathbb{D}$ be defined as $\phi(z)=z_0~\forall~z\in \mathbb{D}$ and $f \in \mathbb{H}^p(\mathbb{D}).$ Then $f(z)=\sum_{n=0}^{\infty}a_nz^n.$
	Now, define $T:\mathbb{H}^p(\mathbb{D}) \to \mathbb{H}^p(\mathbb{D})$ as $$T(f)=\sum_{n=0}^{\infty} a_{n+1}z^n.$$ Then for each $f \in \mathbb{H}^p(\mathbb{D}),$ $$(T \circ C_\phi)(f)(z)=T((f\circ \phi)(z))=0.$$ Hence $C_\phi$ is a right zero-divisor.
\end{proof}

\begin{remark}
	The following lemma shows that the converse of the above Theorem \ref{anur11} is not true.
\end{remark}
\begin{lemma}
	For each integer $k\geq2,$ let $\phi:\mathbb{D} \to \mathbb{D}$ be defined as $\phi(z)=z^k~\forall~z\in \mathbb{D}.$ Then $C_{\phi}$ is a right zero-divisor.
\end{lemma}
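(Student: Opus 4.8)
The plan is to exhibit the failure of density of $\mathcal{R}(C_\phi)$ and then apply Proposition \ref{9} (which holds verbatim for the Banach space $\mathbb{H}^p(\mathbb{D})$, $1\leq p<\infty$). First I would record the explicit form of the range: writing $f(z)=\sum_{n=0}^{\infty}a_n z^n\in\mathbb{H}^p(\mathbb{D})$, one has $(C_\phi f)(z)=f(z^k)=\sum_{n=0}^{\infty}a_n z^{nk}$, so every function in $\mathcal{R}(C_\phi)$ has a Taylor expansion supported on the set $\{0,k,2k,\dots\}$. Since $k\geq 2$, the index $1$ is not a multiple of $k$, hence the first Taylor coefficient of $C_\phi f$ vanishes for every $f\in\mathbb{H}^p(\mathbb{D})$.

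Next I would invoke the standard estimate that for $1\leq p<\infty$ the $m$-th Taylor coefficient functional $\psi_m\colon f\mapsto a_m$ is a bounded linear functional on $\mathbb{H}^p(\mathbb{D})$, with $|a_m|\leq\|f\|_{\mathbb{H}^p}$; this comes from $a_m r^m=\frac{1}{2\pi}\int_{0}^{2\pi}f(re^{i\theta})e^{-im\theta}\,d\theta$ combined with Jensen's inequality ($\frac{1}{2\pi}\int_{0}^{2\pi}|f(re^{i\theta})|\,d\theta\leq\|f\|_{\mathbb{H}^p}$ for $p\geq 1$) and letting $r\to 1^-$. Taking $m=1$, the functional $\psi_1$ is nonzero (it takes the value $1$ on the function $z\mapsto z$) and, by the previous paragraph, vanishes on $\mathcal{R}(C_\phi)$, so $\overline{\mathcal{R}(C_\phi)}\subseteq\ker\psi_1\subsetneq\mathbb{H}^p(\mathbb{D})$. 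Thus $\mathcal{R}(C_\phi)$ is not dense, and Proposition \ref{9} gives that $C_\phi$ is a right zero-divisor.

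If one prefers the constructive style of Theorem \ref{anur11}, the annihilating operator can be written down explicitly: let $T\colon\mathbb{H}^p(\mathbb{D})\to\mathbb{H}^p(\mathbb{D})$ send $f$ to the constant function with value $\psi_1(f)=a_1$. Then $T$ is bounded and nonzero, while $(T\circ C_\phi)(f)$ is the constant function whose value is the first Taylor coefficient of $C_\phi f$, namely $0$; hence $T\circ C_\phi=0$ and $C_\phi$ is a right zero-divisor.

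The argument has no real obstacle; the only point needing a word of justification is the boundedness of the coefficient functional on $\mathbb{H}^p(\mathbb{D})$ across the whole range $1\leq p<\infty$ (for $p=2$ this is immediate from $\|f\|_{\mathbb{H}^2}^2=\sum|a_n|^2$, and for general $p\geq 1$ it is the Jensen-inequality estimate above). I would also remark that the hypothesis $k\geq 2$ is exactly what is used: it guarantees that some fixed index (here $1$) is missing from $\mathcal{R}(C_\phi)$, whereas for $k=1$ we have $C_\phi=I$, which is not a zero-divisor.
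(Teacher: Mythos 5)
Your argument is correct, and it rests on the same core observation as the paper's proof: the range of $C_{z^k}$ consists of power series whose Taylor coefficients are supported on multiples of $k$, so any coefficient at an index not divisible by $k$ must vanish on $\mathcal{R}(C_\phi)$. Where you diverge is in how you convert this into a right zero-divisor. The paper works in $\mathbb{H}^2(\mathbb{D})$ and directly writes down an annihilating operator $T$ that projects a function $g=\sum b_n z^n$ onto the coefficients at indices $nk+1$ (an infinite-rank coefficient projection, whose boundedness is clear on $\mathbb{H}^2$ by orthogonality of the monomials). You instead use the single coefficient functional $\psi_1\colon f\mapsto a_1$, either to show $\overline{\mathcal{R}(C_\phi)}\subseteq\ker\psi_1$ and invoke Proposition \ref{9}, or to build the rank-one annihilator $f\mapsto a_1\cdot 1$. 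Your route buys two things: it works uniformly for all $1\leq p<\infty$ (the bound $|a_m|\leq\|f\|_{\mathbb{H}^p}$ via the Cauchy integral and Jensen's inequality is exactly the right justification, whereas the paper's projection onto the arithmetic progression $\{nk+1\}$ is only obviously bounded for $p=2$), and the annihilator is simpler, being rank one. The paper's construction, on the other hand, exhibits a ``larger'' annihilator and stays entirely inside the Hilbert-space setting. Your closing remark that $k\geq 2$ is exactly what makes the index $1$ unavailable, and that $k=1$ gives $C_\phi=I$, is a correct and worthwhile observation.
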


\begin{proof}
	Let $\phi(z)=z^k$ and $f\in \mathbb{H}^2(\mathbb{D}).$ Then $$f(z)=\sum_{n=0}^{\infty}a_nz^n,~ \text{where}~ \sum_{n=0}^{\infty}|a_n|^2<\infty.$$ Hence, $$(C_{\phi}f)(z)=f(z^k)=\sum_{n=0}^{\infty}a_nz^{nk}.$$ Now, for $g(z)=\sum_{n=0}^{\infty}b_nz^n \in \mathbb{H}^2(\mathbb{D}),$ define $T:\mathbb{H}^2(\mathbb{D}) \to \mathbb{H}^2(\mathbb{D})$ as $$T(g)=\sum_{n=0}^{\infty} a_{nk+1}z^{nk+1}.$$ Then $T(C_{\phi}f)=0~\forall f\in \mathbb{H}^2(\mathbb{D}).$ Hence $C_{\phi}$ is a right zero-divisor.
\end{proof}

\subsection{Composition operator on $\ell^p~(1\leq p<\infty)$ space as TDZ} \label{sec3.6}\

In this section, we give a characterization of the composition operators on $\ell^p$ which are zero-divisors and topological divisors of zero in $\mathcal{B}(\ell^p).$

\begin{theorem}\label{Anurag12}
	$C_{\phi}\in \mathcal{B}(\ell^p)$ is a right zero-divisor if and only if $\phi$ is not injective.
\end{theorem}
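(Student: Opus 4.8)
The statement to prove is: $C_\phi \in \mathcal{B}(\ell^p)$ is a right zero-divisor if and only if $\phi$ is not injective. My plan is to exploit Proposition \ref{9}, which says $C_\phi$ is a right zero-divisor precisely when $\mathcal{R}(C_\phi)$ is not dense in $\ell^p$, together with the fact (Theorem \ref{anurag7}(2)) that $C_\phi$ is onto if and only if $\phi$ is one-one. The suspicion is that for the composition operator on $\ell^p$, non-density of the range is already equivalent to failure of surjectivity, so the theorem should follow by stringing these together. Concretely, $C_\phi f = \sum_{n=1}^\infty f(n)\,\chi_{\phi^{-1}(n)}$, so a vector $g = (g(m))_{m\in\mathbb{N}}$ lies in $\mathcal{R}(C_\phi)$ iff $g$ is constant on each fiber $\phi^{-1}(n)$ (with the common value $f(n)$), and iff additionally the resulting sequence $(f(n))$ lies in $\ell^p$.

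First I would handle the easy direction: if $\phi$ is injective, then by Theorem \ref{anurag7}(2) $C_\phi$ is onto, hence $\mathcal{R}(C_\phi) = \ell^p$ is trivially dense, so by Proposition \ref{9} $C_\phi$ is not a right zero-divisor. For the contrapositive of the converse, suppose $\phi$ is not injective. Then there exist $i \neq j$ with $\phi(i) = \phi(j) =: n_0$, so $\{i,j\} \subseteq \phi^{-1}(n_0)$. Every $g \in \mathcal{R}(C_\phi)$ must satisfy $g(i) = g(j)$ (both equal $f(n_0)$). Hence $\mathcal{R}(C_\phi)$ is contained in the closed hyperplane $\{g \in \ell^p : g(i) - g(j) = 0\}$, which is a proper closed subspace of $\ell^p$ (it is the kernel of the nonzero bounded functional $g \mapsto g(i)-g(j)$, using that point-evaluations are bounded on $\ell^p$). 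Therefore $\mathcal{R}(C_\phi)$ is not dense, and by Proposition \ref{9}, $C_\phi$ is a right zero-divisor. To make the argument self-contained I can even exhibit the annihilating operator directly: take the nonzero functional $\psi(g) = g(i) - g(j)$ and any $0 \neq x_0 \in \ell^p$, and set $S = x_0 \otimes \psi$, i.e. $S(g) = \psi(g)\,x_0$; then $(S C_\phi)(f) = \psi(C_\phi f)\,x_0 = 0$ for all $f$, so $S$ witnesses that $C_\phi$ is a right zero-divisor.

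There is essentially no deep obstacle here; the one point that deserves care is the boundedness of the coordinate functionals $g \mapsto g(i)$ on $\ell^p$ (immediate from $|g(i)| \le \|g\|_p$ for $1 \le p < \infty$), which is what makes the hyperplane in question closed, and hence what lets Proposition \ref{9} or the Hahn–Banach construction apply. I would also remark that this argument is robust: it shows the range of $C_\phi$ is contained in the closed subspace of sequences constant on fibers of $\phi$, and a non-trivial fiber forces that subspace to be proper — so the dichotomy "injective $\Leftrightarrow$ dense range" for $C_\phi$ on $\ell^p$ is exactly what powers the theorem.
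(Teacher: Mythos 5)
Your proof is correct, and in the converse direction it takes a genuinely different --- and tighter --- route than the paper. For the forward direction the paper argues directly: if $TC_\phi=0$ then $T(\chi_{\phi^{-1}(n)})=0$ for all $n$, and injectivity of $\phi$ makes the fibers singletons whose union is $\mathbb{N}$, so $T$ kills every $\chi_m$ and hence $T=0$; your route via Theorem \ref{anurag7}(2) (injective $\Rightarrow$ $C_\phi$ onto) plus Proposition \ref{9} is equivalent in substance. The real divergence is in the converse: the paper picks $f_0\in\ell^p\setminus\mathcal{R}(C_\phi)$, extends a Hamel basis of $\mathcal{R}(C_\phi)$ by $f_0$, and prescribes $T$ on that basis, asserting its boundedness as ``clear'' --- an assertion that is not justified (operators prescribed on a Hamel basis are generally unbounded, and mere non-surjectivity does not by itself preclude dense range). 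Your argument instead observes that a non-trivial fiber $\phi(i)=\phi(j)$ traps $\mathcal{R}(C_\phi)$ inside the proper closed hyperplane $\{g\in\ell^p: g(i)=g(j)\}$, the kernel of the bounded nonzero functional $\psi(g)=g(i)-g(j)$, so the range is not dense and Proposition \ref{9} (or your explicit rank-one annihilator $S(g)=\psi(g)x_0$) finishes the job. This buys you a fully rigorous converse with a concrete witnessing operator, whereas the paper's construction as written has a gap; your version is the one I would keep.
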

\begin{proof}
	Suppose $\phi$ is injective. Let $T\in \mathcal{B}(\ell^p)$ be such that $T C_{\phi}=0.$ Then $$T C_{\phi}(\mychi_n)=T(\mychi_{\phi^{-1}(n)})=0 \ \text{ for each } \ n\geq1.$$ Since $\phi$ is injective, therefore $\vert\phi^{-1}(n)\vert\leq 1 \ \forall \ n\in \mathbb{N}$. Also $$\bigcup_{n=1}^{\infty}\phi^{-1}(n)=\mathbb{N}.$$ Hence, it follows that $T(\mychi_n)=0~\forall~ n\geq 1.$ This implies that $T=0.$ Hence $C_{\phi}$ can not be a right zero-divisor.\\
	Conversely, suppose $\phi$ is not injective. Then $C_\phi$ is not surjective. Hence there exists an $f_0\in \ell^p \backslash \mathcal{R}(C_\phi)$. Let $\mathfrak{B_1}=\{f_\alpha:\alpha \in \Delta_1\}$ be a Hamel basis for $\mathcal{R}(C_\phi)$ and $\mathfrak{B}$ be a Hamel basis for $\ell^p$ containing $\mathfrak{B_1}$ and $f_0.$ Now, define $T:\ell^p \to \ell^p$ as $$T(f)=\begin{cases}
		f_0, & \text{if}~ f=f_0 \\ 0, &\text{if}~ f \in \mathfrak{B}\backslash \{f_0\}. 
	\end{cases}$$ Clearly, $T$ defines a bounded linear operator on $\ell^p$ and for each $f\in \ell^p,$  $$T C_\phi(f)=T(C_\phi f)=0.$$ This implies that $C_\phi$ is a right zero-divisor.
\end{proof}
\begin{theorem}\label{anurag13}
	$C_{\phi}\in \mathcal{B}(\ell^p)$ is a left zero-divisor if and only if $\phi$ is not surjective.
\end{theorem}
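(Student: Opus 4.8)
The plan is to translate ``$C_\phi$ is a left zero-divisor'' into a statement about $\ker C_\phi$ and then appeal to Theorem \ref{anurag7}. First I would record the elementary equivalence: $C_\phi$ is a left zero-divisor in $\mathcal{B}(\ell^p)$ precisely when there is a nonzero $T\in\mathcal{B}(\ell^p)$ with $C_\phi T=0$, i.e.\ with $\mathcal{R}(T)\subseteq\ker C_\phi$. Since $(C_\phi f)(m)=f(\phi(m))$ for all $m$ (equivalently $C_\phi\mychi_k=\mychi_{\phi^{-1}(k)}$), a function $f\in\ell^p$ lies in $\ker C_\phi$ if and only if $f$ vanishes on $\mathcal{R}(\phi)$; hence $\ker C_\phi\neq\{0\}$ if and only if $\mathcal{R}(\phi)\neq\mathbb{N}$, i.e.\ $\phi$ is not surjective. (This last equivalence is also exactly Theorem \ref{anurag7}$(1)$, since $C_\phi$ is injective iff $\ker C_\phi=\{0\}$.)

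For the forward direction I would assume $C_\phi$ is a left zero-divisor, so $C_\phi T=0$ for some $T\neq0$, pick $g\in\ell^p$ with $Tg\neq0$, and observe that $Tg$ is a nonzero element of $\ker C_\phi$. Thus $C_\phi$ is not injective, and by Theorem \ref{anurag7}$(1)$ (or directly, since $Tg$ is a nonzero element of $\ell^p$ vanishing on $\mathcal{R}(\phi)$) the map $\phi$ is not surjective.

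For the converse I would fix $k\in\mathbb{N}\setminus\mathcal{R}(\phi)$, so that $\mychi_k\neq0$ and $C_\phi\mychi_k=\mychi_{\phi^{-1}(k)}=0$, and then take the rank-one operator $T\colon\ell^p\to\ell^p$, $T(f)=f(1)\,\mychi_k$ (equivalently $T=\mychi_k\otimes\psi$ for any nonzero bounded functional $\psi$ on $\ell^p$). It is bounded, nonzero, and $(C_\phi T)(f)=f(1)\,C_\phi\mychi_k=0$ for every $f\in\ell^p$, so $C_\phi T=0$ and $C_\phi$ is a left zero-divisor. This mirrors the constructions in Theorems \ref{anurag12} and \ref{Anurag12}.

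I do not anticipate a genuine obstacle: the statement is essentially a dualization of Theorem \ref{Anurag12} combined with the injectivity/surjectivity dictionary of Theorem \ref{anurag7}. The only point requiring a moment's care is exhibiting an honest nonzero bounded operator whose range lies inside $\ker C_\phi$, and the rank-one operator built from $\mychi_k$ (with $k\notin\mathcal{R}(\phi)$) handles that cleanly.
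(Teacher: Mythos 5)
Your proposal is correct and follows essentially the same route as the paper: both directions reduce to the equivalence ``$C_\phi$ injective $\iff$ $\phi$ surjective'' from Theorem \ref{anurag7}, and the converse uses the same rank-one operator $T(f)=f(1)f_0$ with $f_0\in\ker C_\phi$ nonzero (the paper takes a generic such $f_0$, you take the explicit $\mychi_k$ with $k\notin\mathcal{R}(\phi)$, which is a harmless specialization). No gaps.
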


\begin{proof}
	Suppose $\phi$ is surjective. Then $C_\phi$ is injective. Let $T\in \mathcal{B}(\ell^p)$ be such that $C_\phi T=0.$ Then for each $n\geq1,~C_\phi T(\mychi_n)=0.$ This implies that, $T(\mychi_n)=0~\forall~n\geq1.$ Therefore $T=0$ and hence, $C_\phi$ can not be a left zero-divisor. \\
	Conversely, suppose $\phi$ is not surjective. Then $C_\phi$ is not injective. Hence there exists $0\neq f_0\in\ell^p$ such that $C_\phi(f_0)=0.$ Define $T:\ell^p \to \ell^p$ as, $$T(f)=f(1)f_0~~\forall ~f\in \ell^p.$$ Clearly, $T\neq0$ since $T(\mychi_1)=f_0 \neq0.$ Also, for each $f\in\ell^p,$ $$\Vert Tf\Vert=\vert f(1)\vert \Vert f_0\Vert\leq\Vert f\Vert\Vert f_0\Vert.$$ Hence $T$ is bounded, and for each $f\in\ell^p,$ $C_\phi T(f)=C_\phi(f(1)f_0)=f(1)C_\phi(f_0)=0.$ Hence $C_\phi T=0.$ Therefore $C_\phi$ is a left zero-divisor.
\end{proof}

\begin{theorem}
	$C_\phi \in \mathcal{B}(\ell^p)$ is a TDZ if and only if $\phi$ is not invertible.
\end{theorem}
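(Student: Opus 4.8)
The plan is to establish the equivalence by chaining together the two zero-divisor theorems already proved (Theorems \ref{Anurag12} and \ref{anurag13}) with the characterization of TDZ that we can extract from the structure of $C_\phi$ on $\ell^p$. First I would dispose of one direction using a soft argument: if $\phi$ is invertible (that is, $\phi:\mathbb{N}\to\mathbb{N}$ is a bijection), then by Theorem \ref{anurag7}(3) the operator $C_\phi$ is invertible in $\mathcal{B}(\ell^p)$, hence regular, and a regular element of a Banach algebra is never a TDZ (if $C_\phi C_\phi^{-1}=I$ and $C_\phi T_n\to 0$ with $\|T_n\|=1$, then $T_n=C_\phi^{-1}C_\phi T_n\to 0$, contradicting $\|T_n\|=1$). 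This gives the forward implication in contrapositive form: a TDZ must have $\phi$ non-invertible.

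For the converse, suppose $\phi$ is not invertible. Then $\phi$ fails to be injective or fails to be surjective. If $\phi$ is not injective, then by Theorem \ref{Anurag12} the operator $C_\phi$ is a right zero-divisor, i.e. there is $0\neq T\in\mathcal{B}(\ell^p)$ with $TC_\phi=0$; setting $T_n=T/\|T\|$ for all $n$ gives a norm-one sequence with $T_nC_\phi=0\to 0$, so $C_\phi$ is a (left) TDZ. Symmetrically, if $\phi$ is not surjective, then by Theorem \ref{anurag13} the operator $C_\phi$ is a left zero-divisor, so there is $0\neq T$ with $C_\phi T=0$, and the constant sequence $T_n=T/\|T\|$ witnesses that $C_\phi$ is a (right) TDZ. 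In either case $C_\phi$ is a TDZ, completing the proof.

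I expect the only delicate point to be bookkeeping about left versus right: the definition of TDZ in the paper is symmetric (either $xx_n\to 0$ or $x_nx\to 0$ suffices), so a one-sided zero-divisor is automatically a TDZ, and I should state this observation once at the outset so the two cases of the converse read cleanly. The main obstacle, such as it is, is simply making sure the non-invertible hypothesis is correctly decomposed — "$\phi$ not invertible" means "$\phi$ not bijective," which is "$\phi$ not injective \emph{or} $\phi$ not surjective," and each disjunct is handled by one of the two preceding theorems — rather than needing any genuinely new construction. No Baire-category or approximation argument is required here since every relevant failure produces an honest zero-divisor, not merely a topological one.
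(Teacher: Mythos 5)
Your proposal is correct and follows essentially the same route as the paper: the forward direction via invertibility of $C_\phi$ (Theorem \ref{anurag7}) precluding TDZ, and the converse by splitting ``not invertible'' into ``not injective'' (Theorem \ref{Anurag12}, right zero-divisor) or ``not surjective'' (Theorem \ref{anurag13}, left zero-divisor). The extra details you supply --- the explicit contradiction showing a regular element cannot be a TDZ, and the constant norm-one sequence upgrading a zero-divisor to a TDZ --- are correct and merely make explicit what the paper leaves implicit.
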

\begin{proof}
	If $\phi$ is invertible then, by Theorem $\ref{anurag7},$ $C_\phi$ is also invertible. Hence $C_\phi$ can not be TDZ.\\
	Conversely, suppose $\phi$ is not invertible. If $\phi$ is not injective, then by Theorem $\ref{Anurag12},$ $C_\phi$ is a right zero-divisor. Further, if $\phi$ is not surjective, then by Theorem $\ref{anurag13},$ $C_\phi$ is a left zero-divisor. Therefore $C_\phi$ is a TDZ.
\end{proof}
\begin{remark}
	Let $\phi_1$ and $\phi_2$ be self maps on $\mathbb{N}$ such that either $\phi_1$ is not surjective or $\phi_2$ is not injective, then $C_{\phi_2}\circ C_{\phi_1}$ is a TDZ in $\mathcal{B}(\ell^p).$ 
\end{remark}
\begin{proof}
	Suppose either $\phi_1$ is not surjective or $\phi_2$ is not injective, then $\phi_1\circ \phi_2$ is not invertible. Hence by above theorem, $C_{\phi_1\circ \phi_2}$ is a TDZ.
	But, $C_{\phi_1\circ \phi_2}=C_{\phi_2}\circ C_{\phi_1}.$ Therefore $C_{\phi_2}\circ C_{\phi_1}$ is a TDZ.
\end{proof}



\end{document}